 \newtheorem{thm}{Theorem}[section]
 \newtheorem{cor}[thm]{Corollary}
 \newtheorem{lemma}[thm]{Lemma}
 \newtheorem{prop}[thm]{Proposition}
 \theoremstyle{definition}
 \newtheorem{mydef}[thm]{Definition}
 \theoremstyle{remark}
 \newtheorem{remark}[thm]{Remark}
\newcommand{\twomat}[4]{\left(\begin{array}{cc} #1 & #2 \\  #3 & #4 \end{array}\right)}
\title[Perturbed periodic Jacobi matrices]{Eigenvalues for perturbed periodic Jacobi matrices by the Wigner-von Neumann approach}
\author{Edmund Judge}
\address{SMSAS, Cornwallis Building, University of Kent, Canterbury,\\ Kent CT2 7NF, United Kingdom}
\email{ej75@kent.ac.uk}
\author{Sergey Naboko}
\address{ Depart. of Math. Physics, Institute of Physics, St.Petersburg University, \\ St.Petergoff, St.Petersburg, 198904, Russia} \email{sergey.naboko@gmail.com}
\author{Ian Wood}
\address{SMSAS, Cornwallis Building, University of Kent, Canterbury,\\ Kent CT2 7NF, United Kingdom}
\email{i.wood@kent.ac.uk}
\date{}
\begin{document}

\begin{abstract}
The Wigner-von Neumann method, which has previously been used for perturbing continuous Schr\"{o}dinger operators, is here applied to their
discrete counterparts. In particular, we consider perturbations of arbitrary $T$-periodic Jacobi matrices. The asymptotic behaviour of the
subordinate solutions is investigated, as too are their initial components, together giving a general technique for embedding eigenvalues,
$\lambda$, into the operator's absolutely continuous spectrum. Introducing a new rational function, $C(\lambda;T)$, related to the periodic
Jacobi matrices, we describe the elements of the a.c. spectrum for which this construction does not work (zeros of $C(\lambda;T)$); in
particular showing that there are only finitely many of them.
\end{abstract}

\thanks{Edmund Judge was supported by the Engineering and Physical Sciences Research Council (grant EP/M506540/1).
Sergey Naboko was supported by the Russian Science Foundation (grant  15-11-30007),
NCN 2013/09/BST1/04319 and Marie Curie grant (2013-2014 yy) PIIF-GA-2011-299919.}
\subjclass{}
\keywords{Spectral theory, Periodic Jacobi operators, Wigner-von Neumann potential, Subordinate solutions}

\maketitle
\section{Introduction}

First published in 1929, the Wigner-von Neumann method provides a way of embedding eigenvalues into the absolutely continuous spectrum of a
one-dimensional Schr\"{o}dinger operator~{\cite{3}}. Specifically, the operator is perturbed by a potential of the form $$\frac{c\sin(2\omega
x+\varphi)}{x}$$ causing the eigenvalue $E=\omega^2$ to become embedded in the interval $[0,\infty)$ of a.c. spectrum. Since then the method
has been adapted to embed multiple eigenvalues $E_i=\omega_i^2$, $i=1,2,\dots,N$ \cite{4} using a single potential
$$\sum\limits_{i=1}^N\frac{c_i\sin{(2\omega_ix+\varphi_i)}}{x}. $$ More recently, the technique has been employed on periodic
Schr\"{o}dinger operators which have several or infinitely many bands of absolutely continuous spectrum~\cite{1,9a, 9b}. In particular, the
unperturbed operators have the form $$-\frac{d^2}{dx^2}+Q_{per}(x)$$ where $Q_{per}(x+T)=Q_{per}(x)$ for some period $T$. Note that another method based on the explicit solution of the inverse problem \cite{9} also allows to embed multiple (but finitely many) eigenvalues into the essential spectrum with a potential similar to the above.

The purpose of this paper is to extend the aforementioned ideas to the discrete analogue of Jacobi operators, and produce a new technique for embedding a single eigenvalue into one of the bands of the periodic operator's essential spectrum. Note that there already exist several other approaches for embedding eigenvalues into the essential spectrum of both Schr\"{o}dinger operators~\cite{9d,9h,Sim97} and Jacobi
matrices~\cite{9e,9k,9f,9g,9j,9i}. However, the big advantage of the Wigner-von Neumann method is that it gives an explicit, and relatively simple, formula for the potential and eigenvector, even for the periodic case.

A Jacobi operator is defined to be a tri-diagonal infinite matrix  which is considered as an operator on $l^2(\mathbb{N};\mathbb{C})$. We
consider only real bounded Hermitian Jacobi operators and assume without loss of generality that the off-diagonal entries are positive.
Moreover, we assume the matrix to be $T$-periodic. Then, our operator, $J_T$, has the form \begin{equation}\label{4.10}
\left(\begin{array}{ccccccccccccc}
 b_1&a_1&\\
 a_1&b_2&a_2&\\
 &a_2&b_3&a_3&\\
 &&\ddots&\ddots&\ddots&\\
 &&&a_{T-1}&b_T&a_T\\
 &&&&a_T&b_1&a_1&\\
 &&&&&a_1&b_2&a_2&\\
 &&&&&&\ddots&\ddots&\ddots&\\
 &&&&&&&a_{T-1}&b_{T}&a_T\\
 &&&&&&&&a_T&b_1&a_1&\\
 &&&&&&&&&a_1&b_2&a_{2}&\\
 &&&&&&&&&&\ddots&\ddots&\ddots
 \end{array}\right),\end{equation} with $a_i\in\mathbb{R}, a_i>0$ for all $i$. From Section~\ref{sec3} onwards we will assume for simplicity
 that $b_i=0$ for all $i$. The method also works for more general cases with complex entries and non-zero diagonal, but we stick to the
 simple case to make our constructions more transparent.

Our goal is to make an ansatz for a possible eigenvector (introduced in Section 5) and an ansatz for the potential (Section 6). We then
establish the asymptotics of the potential needed to realize our ansatz. Additionally, we must confirm that the subordinate solution
constructed in this way also satisfies the initial equations encoded within the Jacobi matrix (Section 7), thus giving an embedded
eigenvalue. Section 2 contains some general results on $T$-periodic Jacobi operators, while Sections 3 contains some preliminary results for
the construction. In Section 4 we introduce the aforementioned function $C(\lambda;T)$ and analyze its properties, in particular that it is a
rational function of $\lambda$.

\section{Preliminary results on the spectrum of a period-T Jacobi operator}

Before we consider perturbations, it is best to state some results describing the structure of the spectrum of unperturbed $T$-periodic
Jacobi operators.

It is not hard to see that the spectrum of a period-$T$ Jacobi matrix, $J_T$, is such that \begin{multline*}\sigma(J_T)\subseteq
[-\max\{a_1+a_2,a_2+a_3,\dots,a_T+a_1\}-\min\{b_1,\dots,b_T\},\\\max\{a_1+a_2,a_2+a_3,\dots,a_T+a_1\}+\max\{b_1,\dots,b_T\}].\end{multline*}
The inclusion is sharp for period-$1$ and period-$2$ Jacobi operators, which can easily be proved. Moreover, the operator is self-adjoint and
in the case of the vanishing diagonal its spectrum is symmetric w.r.t. zero.

 The next elementary lemma gives information which will be useful in determining the entries of the monodromy matrix associated to $J_T$.
\begin{lemma}
 Let $$A_s=\left(\begin{array}{cc}
 0&1\\
 c_s&\frac{\lambda-b_s}{a_s}\end{array}\right)$$ with $a_s,c_s\neq 0$. Let $m\in\mathbb{N}$ and $$A(\lambda)=\left(\begin{array}{cc}
 a_{11}(\lambda)&a_{12}(\lambda)\\
 a_{21}(\lambda)&a_{22}(\lambda)\end{array}\right)=\prod\limits_{s=1}^m A_s.$$ Then, for $m\geq 2$,
 \begin{align*}
 a_{11}(\lambda)&=c_1\frac{\lambda^{m-2}}{\prod\limits_{s=2}^{m-1} a_s}+P_{m-3}(\lambda),&
 a_{12}(\lambda)&=\frac{\lambda^{m-1}}{\prod\limits_{s=1}^{m-1} a_s}+P_{m-2}(\lambda),\\
 a_{21}(\lambda)&=c_1\frac{\lambda^{m-1}}{\prod\limits_{s=2}^{m} a_s}+\widetilde{P}_{m-2}(\lambda),&
 ~{and}~~~~~a_{22}(\lambda)&=\frac{\lambda^{m}}{\prod\limits_{s=1}^{m} a_s}+P_{m-1}(\lambda),
 \end{align*}
 where $P_{m-1}(\lambda),P_{m-2}(\lambda),\widetilde{P}_{m-2}(\lambda)$ and $P_{m-3}(\lambda)$ are real polynomials in $\lambda$ of degree
 less than or equal to $m-1,m-2, m-2$ and $m-3$, respectively, and $P_k(\lambda)=0$ for $k<0$.

 \end{lemma}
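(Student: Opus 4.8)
The plan is to proceed by induction on the number of factors $m$, with base case $m=2$. First note that it is natural — and, given the form of the statement, essentially forced — to read the product as the monodromy matrix $\prod_{s=1}^{m}A_s=A_mA_{m-1}\cdots A_1$, so that $A_1$, and hence the coefficient $c_1$, sits at the right-hand end; this is exactly the ordering for which the leading coefficient of the top-left entry is $c_1$ rather than $c_m$. For $m=2$ one multiplies $A_2A_1$ out directly and matches the four entries against the claimed expressions, using only that $P_{-1}=0$, that the empty product $\prod_{s=2}^{1}a_s$ equals $1$, and that $\prod_{s=1}^{1}a_s=a_1$, $\prod_{s=2}^{2}a_s=a_2$, $\prod_{s=1}^{2}a_s=a_1a_2$. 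The fact that every coefficient is real is automatic from $a_s,b_s,c_s\in\mathbb{R}$, so I shall not comment on it further.

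For the inductive step, suppose the formulas hold for an $m$-fold product $B=(a_{ij})$ with $m\ge2$, and write the $(m+1)$-fold product as $A'=(a_{ij}')=A_{m+1}B$. Because every $A_s$ has first row $(0,\,1)$, the first row of $A'$ is literally the second row of $B$:
\[
a_{11}'=a_{21},\qquad a_{12}'=a_{22}.
\]
The key observation is that the induction hypothesis formulas for $a_{21}$ and $a_{22}$ at level $m$ are, term for term, exactly the asserted formulas for $a_{11}$ and $a_{12}$ at level $m+1$: indeed $\lambda^{m-1}=\lambda^{(m+1)-2}$ and $\prod_{s=2}^{m}a_s=\prod_{s=2}^{(m+1)-1}a_s$, with remainder degree $\le m-2=(m+1)-3$, and similarly $\lambda^{m}=\lambda^{(m+1)-1}$, $\prod_{s=1}^{m}a_s=\prod_{s=1}^{(m+1)-1}a_s$, remainder degree $\le m-1=(m+1)-2$. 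So the top row requires nothing further. For the bottom row one computes
\[
a_{21}'=c_{m+1}\,a_{11}+\frac{\lambda-b_{m+1}}{a_{m+1}}\,a_{21},\qquad a_{22}'=c_{m+1}\,a_{12}+\frac{\lambda-b_{m+1}}{a_{m+1}}\,a_{22},
\]
and here the induction hypothesis degree bounds do the work: $c_{m+1}a_{11}$ has degree $\le m-2$ and $c_{m+1}a_{12}$ degree $\le m-1$, so these terms are absorbed into the remainders, while multiplying $a_{21}$ (respectively $a_{22}$) by the degree-one polynomial $(\lambda-b_{m+1})/a_{m+1}$, whose leading coefficient is $1/a_{m+1}$, turns the leading monomial into $c_1\lambda^{m}/\prod_{s=2}^{m+1}a_s$ (respectively $\lambda^{m+1}/\prod_{s=1}^{m+1}a_s$) and leaves a remainder of degree $\le m-1$ (respectively $\le m$). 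These are precisely the claimed formulas for $a_{21}'$ and $a_{22}'$, which closes the induction.

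I do not anticipate a genuine obstacle: the argument is purely algebraic and the only thing needing care is the bookkeeping — keeping straight which of the four entries carries the factor $c_1$, which products run from $s=1$ and which from $s=2$, and the four separate degree bounds — together with the degenerate conventions (empty products equal to $1$, $P_k=0$ for $k<0$) that make the formulas meaningful already at $m=2$, and the fixed ordering of the product noted above.
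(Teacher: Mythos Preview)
Your proposal is correct and follows essentially the same approach as the paper: induction on $m$ with base case $m=2$, the inductive step carried out by left-multiplying by $A_{m+1}$ with the product ordered as $A_mA_{m-1}\cdots A_1$. Your exposition is somewhat more explicit than the paper's --- you isolate the observation that the first row $(0,1)$ of each $A_s$ makes the new top row equal the old bottom row, and you spell out the degree bookkeeping for the new bottom row --- but the underlying computation is identical.
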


 \begin{proof}
We use induction on $m$, $m\geq2$. For the base case of $m=2$ we have
$a_{11}=c_1,a_{12}=\frac{\lambda-b_1}{a_1},a_{21}=c_1\frac{\lambda-b_2}{a_2}$ and $a_{22}=c_2+\frac{(\lambda-b_2)(\lambda-b_1)}{a_2a_1}.$
Immediately, they satisfy the hypothesis.
  Now assume the result holds up to $m=k$. To prove the result for $m=k+1$, observe that by induction,

  {\setstretch{2}
\begin{alignat*}{1}
\prod\limits_{s=1}^{k+1} \left(\begin{array}{cc}
0&1\\
c_s&\frac{\lambda-b_s}{a_s}
\end{array}\right)&=\left(\begin{array}{cc}
0&1\\
c_{k+1}&\frac{\lambda-b_{k+1}}{a_{k+1}}
\end{array}\right)\prod\limits_{s=1}^{k} \left(\begin{array}{cc}
0&1\\
c_s&\frac{\lambda}{a_s}
\end{array}\right)\\
&\hspace{-40pt}=\left(\begin{array}{cc}
0&1\\
c_{k+1}&\frac{\lambda-b_{k+1}}{a_{k+1}}
\end{array}\right)\left(\begin{array}{cc}
c_1\frac{\lambda^{k-2}}{\prod_{s=2}^{k-1} a_s}+P_{k-3}&\frac{\lambda^{k-1}}{\prod_{s=1}^{k-1} a_s}+P_{k-2}\\
c_1\frac{\lambda^{k-1}}{\prod_{s=2}^{k} a_s}+\widetilde{P}_{k-2}&\frac{\lambda^{k}}{\prod_{s=1}^{k} a_s}+P_{k-1}
\end{array}\right)\\
&\hspace{-40pt}=\left(\begin{array}{cc}
c_1\frac{\lambda^{k-1}}{\prod_{s=2}^{k} a_s}+\widetilde{P}_{k-2}&\frac{\lambda^{k}}{\prod_{s=1}^{k} a_s}+P_{k-1}\\
c_1\frac{\lambda^{k}}{\prod_{s=2}^{k+1}a_s}+\widetilde{P}_{k-1}&\frac{\lambda^{k+1}}{\prod_{s=1}^{k+1} a_s}+P_{k}
\end{array}\right).\qedhere\end{alignat*}}
 \end{proof}

\begin{cor}\label{per71}
  Let $M$ be the monodromy matrix for an arbitrary period-$T$ operator, i.e. $$M(\lambda)=\left(\begin{array}{cc}
 m_{11}(\lambda)&m_{12}(\lambda)\\
 m_{21}(\lambda)&m_{22}(\lambda)\end{array}\right):=B_{T}(\lambda)B_{T-1}(\lambda)\dots B_{1}(\lambda)$$ and $B_i(\lambda)$ are the transfer
 matrices given by $$B_{i}(\lambda):=\left(\begin{array}{cc}
0&1\\
\frac{-a_{i-1}}{a_i}&\frac{\lambda-b_i}{a_i}\end{array}\right), \lambda\in\mathbb{C},$$ where $i=1,2,\dots,T,$ with $a_0:=a_T$. Then,
$det(M(\lambda))=1$ and for all $T\geq 1$ we have \begin{align*}
m_{11}(\lambda)&=-a_T\frac{\lambda^{T-2}}{\prod\limits_{s=1}^{T-1} a_s}+P_{T-3}(\lambda),&
m_{12}(\lambda)&=\frac{\lambda^{T-1}}{\prod\limits_{s=1}^{T-1}a_s}+P_{T-2}(\lambda),\\
m_{21}(\lambda)&=-\frac{\lambda^{T-1}}{\prod\limits_{s=1}^{T-1}
a_s}+\widetilde{P}_{T-2}(\lambda),&m_{22}(\lambda)&=\frac{\lambda^T}{\prod\limits_{s=1}^T a_s}+P_{T-1}(\lambda),
 \end{align*}
 where $P_{T-1}(\lambda),P_{T-2}(\lambda), \widetilde{P}_{T-2}(\lambda)$ and $P_{T-3}(\lambda)$ are real polynomials in $\lambda$ of degree
 less than or equal to $T-1,T-2, T-2$ and $T-3$, respectively.
 \end{cor}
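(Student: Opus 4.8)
The plan is to obtain the Corollary as a direct specialisation of Lemma~2.1. The transfer matrices $B_i(\lambda)$ are precisely of the form of the matrices $A_s$ in the Lemma, with the choice $c_s = -a_{s-1}/a_s$ for $s=1,\dots,T$ (recall the convention $a_0:=a_T$), and the ordering conventions agree: the product $M(\lambda)=B_T(\lambda)\cdots B_1(\lambda)$ matches $\prod_{s=1}^{T}A_s$ as defined and computed in the Lemma, where the inductive step attaches each new (higher-indexed) factor on the left, so that $A_1=B_1$ sits on the right. Consequently, for $T\geq 2$ the Lemma applies with $m=T$ and yields the four entries of $M(\lambda)$ once we substitute $c_1\mapsto -a_0/a_1=-a_T/a_1$; the claimed degree bounds on $P_{T-1},P_{T-2},\widetilde{P}_{T-2},P_{T-3}$ are then inherited directly.

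It then remains only to tidy up the expressions. In the $(1,1)$- and $(2,1)$-entries the substitution introduces the factor $a_T/a_1$, which is absorbed into the displayed partial products: one uses $\frac{a_T}{a_1}\cdot\frac{1}{\prod_{s=2}^{T-1}a_s}=\frac{a_T}{\prod_{s=1}^{T-1}a_s}$ for $m_{11}$, and, since $\frac{a_T}{\prod_{s=2}^{T}a_s}=\frac{1}{\prod_{s=2}^{T-1}a_s}$, one gets $\frac{a_T}{a_1}\cdot\frac{1}{\prod_{s=2}^{T}a_s}=\frac{1}{\prod_{s=1}^{T-1}a_s}$ for $m_{21}$; this produces the stated leading coefficients $-a_T$ and $-1$, respectively. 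The $(1,2)$- and $(2,2)$-entries contain no $c_1$ and are already in the required shape.

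The determinant statement is cheaper and handled separately: $\det B_i(\lambda)=-1\cdot(-a_{i-1}/a_i)=a_{i-1}/a_i$, so $\det M(\lambda)=\prod_{i=1}^{T}\frac{a_{i-1}}{a_i}$, which telescopes to $a_0/a_T=1$ using $a_0=a_T$; this argument is valid for every $T\geq 1$. The degenerate case $T=1$ is then checked by hand from $M(\lambda)=B_1(\lambda)=\twomat{0}{1}{-1}{(\lambda-b_1)/a_1}$ (again using $a_0=a_1$), with the understanding that any nominal monomial of negative degree in the entry formulas is to be read as absent; this affects only the leading term of $m_{11}$, which is genuinely $0$ in this case.

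I do not expect a real obstacle here: the entire analytic content already lives in Lemma~2.1, and what remains is bookkeeping. The only points deserving care are getting the ordering of the matrix product right so that $c_1$ really corresponds to $B_1$ (hence to $-a_T/a_1$ and not some other ratio of the $a_s$), collapsing the partial products $\prod_{s=2}^{\bullet}a_s$ correctly, and the small-$T$ bookkeeping just mentioned.
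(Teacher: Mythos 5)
Your proposal is correct and is exactly the argument the paper intends: the Corollary is stated without proof precisely because it is the specialisation $c_s=-a_{s-1}/a_s$ (so $c_1=-a_T/a_1$) of Lemma~2.1, plus the telescoping determinant computation and the trivial $T=1$ check, all of which you carry out accurately.
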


It is a classical fact (see, for example, \cite{7} for Schr\"{o}dinger operators and \cite{6} for Jacobi matrices) that for
$\lambda\in\mathbb{R}$, $|Tr\left(M(\lambda)\right)|<2$ implies that $\lambda$ lies in the absolutely continuous spectrum,
$\sigma_{a.c.}(J_T)$, while if $|Tr\left(M(\lambda)\right)|>2$, then $\lambda$ lies in the resolvent or the point spectrum:
$\rho(J_T)\cup\sigma_p(J_T)$. Furthermore, we can canonically partition the points in the complex plane into three categories: hyperbolic,
elliptic, parabolic.

\begin{mydef}\label{genint}
The hyperbolic points are those $\lambda\in\mathbb{C}$ that produce a monodromy matrix with two eigenvalues, $\mu_1,\mu_2$ such that
$|\mu_1|>1$ and $|\mu_2|<1$; elliptic points those that produce two distinct eigenvalues of modulus one; and parabolic points those that
produce one eigenvalue of algebraic multiplicity two, i.e. $Tr(M(\lambda))=\pm2$. Moreover, we define the generalised interior of the essential
spectrum, denoted $\sigma_{ell}$, to be the set of elliptic points.
\end{mydef}
\begin{remark}
For $\lambda\in\mathbb{R}$ we can distinguish the hyperbolic, elliptic and parabolic cases by $|Tr(M(\lambda))|>2, |Tr(M(\lambda))|< 2,
|Tr(M(\lambda))|=2$, respectively.
\end{remark}

\begin{lemma}\label{4.20}
All points in $\mathbb{C}^+\cup\mathbb{C}^-$ for an arbitrary $T$-periodic Jacobi operator belong to the hyperbolic region. In particular,
all parabolic (and elliptic) points are real.
\end{lemma}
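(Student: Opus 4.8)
The plan is to prove the contrapositive: if $\lambda\notin\mathbb{R}$ then $M(\lambda)$ has no eigenvalue on the unit circle. Since $\det M(\lambda)=1$ (Corollary~\ref{per71}) the two eigenvalues are reciprocal, $\mu_1\mu_2=1$, so $\mu_1$ lies off the unit circle if and only if $\mu_2$ does; hence ``no eigenvalue of modulus one'' is exactly ``hyperbolic'' in the sense of Definition~\ref{genint}, and the remaining assertion about parabolic and elliptic points being real follows at once. The engine of the argument is a discrete Green's (Wronskian) identity evaluated on a Floquet solution.

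So suppose $\lambda\in\mathbb{C}^+\cup\mathbb{C}^-$ and, for contradiction, that $\mu$ is an eigenvalue of $M(\lambda)$ with $|\mu|=1$. Choose an eigenvector $(u_0,u_1)^{\top}\neq 0$ and propagate it by the transfer matrices $B_i(\lambda)$; this produces a nontrivial solution $(u_n)_n$ of the three-term recurrence $a_{n-1}u_{n-1}+b_nu_n+a_nu_{n+1}=\lambda u_n$ (coefficients extended $T$-periodically, $a_0:=a_T$). Because the $B_i$ are $T$-periodic, the one-period propagator starting at index $n$ is conjugate to $M(\lambda)$, and one reads off the Floquet relation $u_{n+T}=\mu u_n$ for every $n$; a short separate check using the recurrence and $\mu\neq 0$ shows $(u_1,\dots,u_T)\neq 0$, so $\sum_{n=1}^{T}|u_n|^2>0$. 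Now set $W_n:=a_n\bigl(u_{n+1}\overline{u_n}-\overline{u_{n+1}}u_n\bigr)$. Since the $a_n,b_n$ are real, $\overline{u_n}$ solves the same recurrence with $\lambda$ replaced by $\overline{\lambda}$; multiplying the two recurrences by $\overline{u_n}$ and $u_n$ respectively and subtracting gives the telescoping identity $W_n-W_{n-1}=(\lambda-\overline{\lambda})|u_n|^2=2i\,\mathrm{Im}(\lambda)|u_n|^2$. The Floquet relation together with $|\mu|^2=1$ and $a_{n+T}=a_n$ yields $W_{n+T}=W_n$, so summing the identity over $n=1,\dots,T$ annihilates the left-hand side and leaves $\mathrm{Im}(\lambda)\sum_{n=1}^{T}|u_n|^2=0$. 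As the sum is strictly positive, $\mathrm{Im}(\lambda)=0$, contradicting $\lambda\notin\mathbb{R}$.

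I do not anticipate a genuine obstacle; the only delicate points are bookkeeping. First, one must justify $u_{n+T}=\mu u_n$ for all $n$ rather than merely $n=0$: this comes from writing the product $B_{n+T}\cdots B_{n+1}$ as $(B_n\cdots B_1)\,M(\lambda)\,(B_n\cdots B_1)^{-1}$ and noting that $(u_n,u_{n+1})^{\top}=(B_n\cdots B_1)(u_0,u_1)^{\top}$ is then an eigenvector of it with eigenvalue $\mu$. Second, in small-period cases one should rule out the Floquet solution vanishing identically over a full period, so the final division by $\sum_{n=1}^{T}|u_n|^2$ is legitimate; this again follows directly from the recurrence and $\mu\neq 0$. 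An alternative route, noting that $J_T-\lambda$ is boundedly invertible for non-real $\lambda$ and hence admits no $\ell^2$ solution, is less convenient here because it must contend with the boundary condition at $n=1$, whereas the Wronskian argument handles the bounded modulus-one Floquet solution head-on.
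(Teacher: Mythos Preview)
Your proof is correct and takes a genuinely different route from the paper's. The paper argues via the resolvent: for non-real $\lambda$ the Weyl vector $f_\lambda=(J_T-\lambda)^{-1}e_1$ lies in $\ell^2$ and satisfies the three-term recurrence for $n\ge 2$; but a glance at powers of the monodromy matrix in the elliptic or parabolic case shows that no nonzero solution is in $\ell^2$, giving the contradiction. You instead take a single Floquet solution with multiplier on the unit circle and run the discrete Green/Wronskian identity over one period to force $\mathrm{Im}\,\lambda=0$. Your argument is more self-contained and purely algebraic --- it never invokes self-adjointness or the resolvent and needs no elliptic/parabolic case split --- whereas the paper's version is shorter once one is willing to quote those operator-theoretic facts. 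Amusingly, the ``alternative route'' you mention and set aside at the end (bounded invertibility of $J_T-\lambda$ for non-real $\lambda$) is essentially the paper's approach; the boundary issue you flag is a non-problem there because the Weyl vector automatically satisfies the homogeneous recurrence from the second row onward.
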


\begin{proof}
Let $\lambda\in\mathbb{C}\setminus\mathbb{R}$ and consider the Weyl vector, ${{f}}_\lambda$, defined as
$${{f}}_\lambda:=(J_T-\lambda)^{-1}{{e}}_1$$ where ${{e}}_1=(1,0,0,\dots)$. Obviously, this Weyl vector belongs to $l^2$ and therefore its
components are decaying. If $\lambda$ belongs to either the elliptic or parabolic regions then trivial analysis of the powers of the
monodromy matrix leads to the fact that no solution of the recurrence relation $$a_{n-1}u_{n-1}+b_nu_n+a_nu_{n+1}=\lambda u_n, n\geq 2$$
decays. Therefore all non-real $\lambda$ are hyperbolic points.\qedhere

\end{proof}
The following result will not surprise specialists in the area, but to the best of our knowledge there is no proof in the literature. Of
course, it is a folklore-type result.

 \begin{lemma}
We consider a family of period-$T$ Jacobi operators
\begin{equation}J_{\epsilon,\eta}:=\left(\begin{array}{ccccccccccc}
 b_1+\epsilon&a_1+\eta&\\
 a_1+\eta&b_2&a_2\\
 &\ddots&\ddots&\ddots\\
 &&a_{T-1}&b_T&a_T\\
 &&&a_{T}&b_1+\epsilon&a_1+\eta\\
 &&&&a_1+\eta&b_2&a_2\\
 &&&&&\ddots&\ddots&\ddots\\
 \end{array}\right) \end{equation}
depending on the two parameters $\epsilon$ and $\eta$.
Then there exists an open dense set $D$ in $\mathbb{R}^2$ such that for all $(\epsilon,\eta)\in D$ the essential spectrum of $J_{\epsilon,\eta}$
consists of $T$ distinct real intervals.
\end{lemma}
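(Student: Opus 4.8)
The plan is to describe the ``good'' parameters $(\epsilon,\eta)$ --- those for which $\sigma_{\mathrm{ess}}(J_{\epsilon,\eta})$ is a disjoint union of exactly $T$ intervals --- as the non-vanishing set of a single real-analytic function on $\mathbb{R}^2$, and then to exhibit one good member of the family. For $a_1+\eta\neq0$ the operator $J_{\epsilon,\eta}$ is a bona fide period-$T$ Jacobi operator, and Corollary~\ref{per71} (applied with $b_1\mapsto b_1+\epsilon$, $a_1\mapsto a_1+\eta$) shows that its discriminant $\Delta_{\epsilon,\eta}(\lambda):=\operatorname{Tr}M_{\epsilon,\eta}(\lambda)$ is a real polynomial in $\lambda$ of degree exactly $T$, with leading coefficient $\big((a_1+\eta)a_2\cdots a_T\big)^{-1}\neq0$. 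Invoking the classical identity $\sigma_{\mathrm{ess}}(J_{\epsilon,\eta})=\{\lambda\in\mathbb{R}:\Delta_{\epsilon,\eta}(\lambda)^2-4\le0\}$ together with Lemma~\ref{4.20} (a non-real zero of $\Delta_{\epsilon,\eta}^2-4$ would force $\operatorname{Tr}M_{\epsilon,\eta}=\pm2$, a non-real parabolic point, which that lemma forbids), all $2T$ zeros of the polynomial $\Delta_{\epsilon,\eta}^2-4$ are real. A degree-$2T$ real polynomial with positive leading coefficient is $\le0$ on a union of at most $T$ closed intervals, and on exactly $T$ pairwise disjoint non-degenerate ones when it has $2T$ distinct zeros; so, putting
\[
F(\epsilon,\eta):=\operatorname{disc}_\lambda\big(\Delta_{\epsilon,\eta}(\lambda)^2-4\big),
\]
every $(\epsilon,\eta)$ with $a_1+\eta\neq0$ and $F(\epsilon,\eta)\neq0$ is good. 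Only this implication is needed.

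Next I would note that the coefficients of $\Delta_{\epsilon,\eta}$ in $\lambda$ are polynomial in $\epsilon$ and rational in $\eta$ with poles only on the line $a_1+\eta=0$, so $F$ is real-analytic on the dense open set $\Omega:=\{(\epsilon,\eta):a_1+\eta\neq0\}$, whose connected components are the half-planes $\{\eta>-a_1\}$ and $\{\eta<-a_1\}$. Since the zero set of a not-identically-zero real-analytic function on a connected open set is closed with empty interior, once $F\not\equiv0$ on each half-plane the set $D:=\{(\epsilon,\eta)\in\Omega:F(\epsilon,\eta)\neq0\}$ will be open and dense in $\mathbb{R}^2$ with every point of it good; this reduces everything to producing one good parameter pair in each half-plane.

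For this I would take $\epsilon=0$, let $\delta:=a_1+\eta$ be small and nonzero, and compare $J_{0,\eta}$ with the operator $J^{(0)}$ obtained by deleting the bond $a_1+\eta$. Then $J^{(0)}=[b_1]\oplus\bigoplus_{k\ge0}\widetilde J$, where $\widetilde J$ is the $T\times T$ Jacobi matrix with diagonal $(b_2,\dots,b_T,b_1)$ and positive off-diagonal $(a_2,\dots,a_T)$; finite Jacobi matrices with positive off-diagonal have simple spectrum, so $\widetilde J$ has $T$ distinct eigenvalues $\mu_1<\dots<\mu_T$ and $\sigma_{\mathrm{ess}}(J^{(0)})=\{\mu_1,\dots,\mu_T\}$. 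From $\|J_{0,\eta}-J^{(0)}\|\le2|\delta|$ the spectrum of $J_{0,\eta}$ lies within $2|\delta|$ of $\{b_1,\mu_1,\dots,\mu_T\}$, so for $|\delta|$ small enough that the closed $2|\delta|$-neighbourhoods of these finitely many points are pairwise disjoint, $\sigma_{\mathrm{ess}}(J_{0,\eta})$ is contained in at most $T+1$ disjoint intervals. Conversely, for each $j$, placing a normalised eigenvector of $\widetilde J$ for $\mu_j$ on the $k$-th copy of $\widetilde J$ and extending by zero, $k\ge0$, gives an infinite orthonormal family on which $J_{0,\eta}-\mu_j$ has norm $\le2|\delta|$ (only the two reinstated bonds of size $\delta$ touching that copy contribute), so by Weyl's criterion $\sigma_{\mathrm{ess}}(J_{0,\eta})$ meets the neighbourhood of every $\mu_j$. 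Hence $\sigma_{\mathrm{ess}}(J_{0,\eta})$ has at least $T$ connected components, so exactly $T$ (it never has more), i.e.\ $F(0,\eta)\neq0$; choosing $\delta>0$ lands in $\{\eta>-a_1\}$ and $\delta<0$ in $\{\eta<-a_1\}$.

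The hard part will be this last step: one has to make the block-decoupling estimate quantitative (the norm bound and the Weyl-sequence estimate) and, above all, make sure the stray summand $[b_1]$ does not create a spurious $(T+1)$st band --- it cannot, precisely because a period-$T$ Jacobi operator never has more than $T$ bands, which closes the count. The reduction in the first paragraph and the analyticity argument in the second are then routine Floquet theory and elementary real analysis.
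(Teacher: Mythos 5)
Your proposal is correct, but it takes a genuinely different route from the paper's. The paper argues perturbatively: Step One shows that simplicity of the $2T$ roots of ${\rm Tr}\,M_{\epsilon,\eta}(\lambda)\mp2$ is stable under small changes of $(\epsilon,\eta)$ (openness), and Step Two shows, via explicit formulas for ${\rm Tr}\,M_{\epsilon,0}$ and ${\rm Tr}\,M_{0,\eta}$ in terms of the entries $p_1,\dots,p_4$ of $M_0$ from Corollary~\ref{per71}, that any degenerate parabolic point can be split by an arbitrarily small $\epsilon$ when $|p_3(\lambda_0)|+|p_3'(\lambda_0)|\neq0$, and in the exceptional case $p_3(\lambda_0)=p_3'(\lambda_0)=0$ is pushed into the hyperbolic region by an arbitrarily small $\eta$; iterating over the finitely many degenerate roots gives density. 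You instead encode non-degeneracy as the non-vanishing of the single real-analytic function $F=\operatorname{disc}_\lambda\bigl(\Delta_{\epsilon,\eta}^2-4\bigr)$ on $\{a_1+\eta\neq0\}$, obtain openness and density for free from the structure of real-analytic zero sets, and reduce everything to exhibiting one good parameter per half-plane, which you produce by the near-decoupling limit $a_1+\eta\to0$ together with a Weyl-sequence argument. This is shorter, avoids the paper's case analysis entirely, and yields the stronger conclusion that the exceptional set is a null set; the price is that you must travel far from the given operator in parameter space (harmless, since only an open dense subset of $\mathbb{R}^2$ is claimed) and that you lean on standard Floquet facts which the paper works around.

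One step deserves more care: the inference ``exactly $T$ connected components $\Rightarrow F(0,\eta)\neq0$.'' Counting the $2T$ roots of $\Delta^2-4$ with multiplicity does not by itself exclude the scenario of $T-1$ honest bands plus one component that is an isolated point, i.e.\ a root of multiplicity exactly $2$ at a local extremum of $\Delta$ with value $\pm2$; the total multiplicity is then still $2T$, yet $F=0$. To close this you need the classical fact that a band of a periodic Jacobi operator with nonzero off-diagonal entries never degenerates to a point: every root of $\Delta-2\cos\theta$ is real for each $\theta$ (these are the eigenvalues of the self-adjoint $T\times T$ Floquet fibre), so $\Delta$ cannot have a local minimum with value $2$ or a local maximum with value $-2$, since otherwise $\Delta-2\cos\theta$ would acquire non-real roots for small $\theta\neq0$. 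Granting this, each of your $T$ components is a non-degenerate interval carrying exactly two simple roots of $\Delta^2-4$, and $F\neq0$ follows. The fact is standard and citable, so the gap is minor, but it should be made explicit.
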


\begin{proof} We will refer to the situation that $\sigma_{ess}(J_{\epsilon,\eta})$
consists of $T$ distinct real intervals as the non-degenerate case. The proof will consist of two parts: We  show that non-degeneracy is stable under small perturbations, while on the other hand the degenerate case is not stable. We initially introduce some notation.

Define the transfer matrices for $J_{\epsilon,\eta}$ as $$B_{i}(\lambda):=\left(\begin{array}{cc}
0&1\\
\frac{-a_{i-1}}{a_i}&\frac{\lambda-b_i}{a_i}\end{array}\right),$$
for $i=3,4,\dots,T,$ and
$$B_{1,\epsilon,\eta}(\lambda):=\left(\begin{array}{cc}
0&1\\
\frac{-a_{T}}{a_1+\eta}&\frac{\lambda-b_1-\epsilon}{a_1+\eta}\end{array}\right), \quad B_{2,\epsilon,\eta}(\lambda):=\left(\begin{array}{cc}
0&1\\
-\frac{a_{1}+\eta}{a_2}&\frac{\lambda-b_2}{a_2}\end{array}\right).$$
Let $M_0$ be the monodromy matrix for $J_{0,0}$, i.e.~$M_0:=B_{T}B_{T-1}\dots B_{2,0,0} B_{1,0,0}$, and $M_{\epsilon,\eta}$ the monodromy matrix for $J_{\epsilon,\eta}$, i.e.~
\begin{equation}\label{Meps}M_{\epsilon,\eta}:=B_{T,0,0}B_{T-1,0,0}\dots B_{3,0,0} B_{2,\epsilon,\eta}B_{1,\epsilon,\eta}= M_0 B_{1,0,0}^{-1}B_{2,0,0}^{-1}B_{2,\epsilon,\eta}B_{1,\epsilon,\eta}.\end{equation}

 By Corollary~\ref{per71} we have $$M_0(\lambda)=\left(\begin{array}{cc}
 p_1(\lambda)&p_2(\lambda)\\
 p_3(\lambda)&p_4(\lambda)\end{array}\right),$$
where $p_1(\lambda),p_2(\lambda),p_3(\lambda)$ and $p_4(\lambda)$ are real polynomials in $\lambda$ of order $T-2$,
 $T-1,T-1$ and $T$ respectively. Then, as $\lambda\mapsto \left(Tr(M_0(\lambda))\pm 2\right)$ are two polynomials each of degree $T$ in
 $\lambda$, there are at most $2T$ real zeros of these functions, providing at most $T$ intervals of a.c. spectrum.  Recall from Lemma~\ref{4.20} that all of the parabolic points
for $J_T$ are real.

 {\underline{(Step One)}} It needs to be shown that if $J_{0,0}$ is non-degenerate, then adding sufficiently small $\epsilon, \eta$ to the operator does not cause two
 previously distinct parabolic points to overlap. The argument is simple: For each pair of distinct parabolic points $(\lambda_j,\lambda_k)$ of $J_{0,0}$ there
 exists $\delta_{j,k}>0$ such that for the corresponding parabolic points of $J_{\epsilon,\eta}$ we have $\lambda_j(\epsilon,\eta)\neq\lambda_k(\epsilon,\eta)$ for $|\epsilon|, |\eta|<\delta_{j,k}$, using  the fact that the roots of a polynomial depend continuously on its coefficients. (See, for example, Appendix A in \cite{2}.)
Then, since there are at most $2T$ parabolic points in total, we can define $$\delta:=\min_{j,k}\delta_{j,k}>0$$ which implies
$$\lambda_{m}(\epsilon,\eta)\neq \lambda_n(\epsilon,\eta)$$ for all $|\epsilon|, |\eta| <\delta, m,n\in \{1,\dots,2T\}$, $m\neq n$. This shows that the non-degenerate case is stable.

{\underline{(Step Two)}}   We now show that the case where two of the intervals of essential spectrum of $J_{0,0}$ overlap is unstable. Let $\lambda_0$ be a parabolic point for $J_{0,0}$. We will only consider the case when ${\rm{Tr}}(M_0(\lambda_0))=2$, the case ${\rm{Tr}}(M_0(\lambda_0))=-2$ can be dealt with similarly. Assume that $\lambda_0\notin\partial\sigma_{ess}(J_{0,0})$. Then $\frac{d}{d\lambda} {\rm{Tr}}(M_0(\lambda_0))=0$, otherwise ${\rm{Tr}}(M_0(\lambda))-2$ would change sign at $\lambda_0$ and $\lambda_0$ would separate the elliptic and hyperbolic regions, implying $\lambda_0\in\partial\sigma_{ess}(J_{0,0})$.

We now show that in most cases a diagonal perturbation is sufficient to split the overlapping intervals. Assume that
\begin{equation}\label{p3} |p_3(\lambda_0)|+|p_3'(\lambda_0)|\neq 0.\end{equation}
Let $\lambda$ (depending on $\epsilon$) be a degenerate parabolic point for some $J_{\epsilon,0}$, i.e.
   \begin{equation}\label{ess14}{\rm{Tr}}(M_{\epsilon,0}(\lambda))=2 \quad \hbox{ and }
  \frac{d}{d\lambda}{\rm{Tr}}(M_{\epsilon,0}(\lambda))=0\end{equation}
	Our objective is to show that this cannot happen for sufficiently small $\lambda-\lambda_0$ and $\epsilon$. Due to continuous dependence of the roots on the small parameter $\epsilon$, there is no need to consider the case $\rm{Tr}(M_{\epsilon,0}(\lambda))=-2$. Also, \eqref{p3} will hold (for the same polynomial $p_3$ from $M_0$) with $\lambda_0$ replaced by $\lambda$ in a sufficiently small neighbourhood of $\lambda_0$.
	Noting that $B_{2,\epsilon,0}$ is independent of $\epsilon$ we get that
\begin{align*}
M_{\epsilon,0}&=B_{T}B_{T-1}\dots B_{2,0,0}\bigg(B_{1,0,0}-\frac{\epsilon}{a_1}\left(\begin{array}{cc}
0&0\\
0&1\end{array}\right)\bigg)\\
&=M_0\bigg(I-\frac{\epsilon}{a_1}{B}_{1,0,0}^{-1}\left(\begin{array}{cc}
0&0\\
0&1\end{array}\right)\bigg).
\end{align*}
 As $$B^{-1}_{1,0,0}(\lambda)=\left(\begin{array}{cc}
 \frac{\lambda-b_1}{a_T}&-\frac{a_1}{a_T}\\
 1&0
 \end{array}\right),$$ we get

 \begin{equation}\label{ess16}{\rm{Tr}}(M_{\epsilon,0}(\lambda))=p_1(\lambda)+p_4(\lambda)+\frac{\epsilon p_3(\lambda)}{a_T }.\end{equation}

 Now, Equations~\eqref{ess16} and \eqref{ess14} combine to give the new conditions
 \begin{equation}\label{ess17}
 p_1(\lambda)+p_4(\lambda)+\frac{\epsilon p_3(\lambda)}{a_T }=2
 \end{equation} and
 \begin{equation}\label{ess18}
 p_1'(\lambda)+p_4'(\lambda)+\frac{\epsilon p_3'(\lambda)}{a_T }=0.
 \end{equation}

If Equations~\eqref{ess17} and \eqref{ess18} are both satisfied then we obtain

 \begin{equation}\label{ess20}(2-p_1(\lambda)-p_4(\lambda))p_3'(\lambda)+(p_1'(\lambda)+p_4'(\lambda))p_3(\lambda)=0.\end{equation}

 By invoking Corollary~{\ref{per71}} we observe that the product of polynomials  on the left hand side equals
\begin{equation}\label{ess19} 2p_3'(\lambda)-\left(\frac{p_1(\lambda)+p_4(\lambda)}{p_3(\lambda)}\right)'p_3^2(\lambda).\end{equation}
Note that the term $2p'_3(\lambda)$ is a polynomial of degree not greater than $(T-2)$, the rational function
$\left(\frac{p_1(\lambda)+p_4(\lambda)}{p_3(\lambda)}\right)'$ is of order $0$, and the term $p_3^2(\lambda)$ is a polynomial of degree
$2(T-1)$. Combining these observations we have that the entire last term of the expression is a polynomial of degree $2(T-1)$. Since $2(T-1)$
is greater than $(T-2)$ the whole expression is of degree $2(T-1)$. Clearly, this is not identically zero. Furthermore, this means there are
at most $2T-2$ roots, say $\mu_1,\dots,\mu_{2T-2}$ which are independent of $\epsilon$. Then, since under our assumptions $|p_3(\lambda)|+|p_3'(\lambda)|\neq 0$,
we calculate the valid values for $\epsilon$ by substituting $\lambda:=\mu_i$ into either
Equation~\eqref{ess17} or \eqref{ess18}, and, so, there are at most $2T-2$ valid values for $\epsilon$. In particular, for any sufficiently small $\epsilon\neq 0$, the value $\lambda$ cannot be a degenerate parabolic point for $J_{\epsilon,0}$.
Therefore, all degenerate parabolic points satisfying \eqref{p3} will be split into non-degenerate points for $|\epsilon|\neq 0$ sufficiently small.

It remains to deal with the exceptional case  $p_3(\lambda_0)=p_3'(\lambda_0)=0$. In this case, we use a perturbation with $\epsilon=0$, $\eta\neq 0$. Note that since ${\rm{Tr}}(M_0(\lambda_0))=2$ and $\det\ M_0(\lambda_0)=1$, we have that $p_3(\lambda_0)=0$ implies $p_1(\lambda_0)= p_4(\lambda_0)= 1$
Then
$$B_{1,0,0}^{-1}B_{2,0,0}^{-1}B_{2,0,\eta}B_{1,0,\eta}= \twomat{\frac{a_1}{a_1+\eta}}{\frac{\lambda-b_1}{a_T}\left(\frac{a_1+\eta}{a_1}-\frac{a_1}{a_1+\eta}\right)}{0}{\frac{a_1+\eta}{a_1}}$$
and using \eqref{Meps} we get
$$
{\rm Tr}M_{0,\eta}(\lambda)=p_1(\lambda) \frac{a_1}{a_1+\eta} + p_3(\lambda)\frac{\lambda-b_1}{a_T}\left(\frac{a_1+\eta}{a_1}-\frac{a_1}{a_1+\eta}\right) +p_4(\lambda)\frac{a_1+\eta}{a_1}.
$$
Evaluating at $\lambda_0$, we get
$$
{\rm Tr}M_{0,\eta}(\lambda_0)=\frac{a_1}{a_1+\eta} +\frac{a_1+\eta}{a_1}=\frac{a_1^2+(a_1+\eta)^2}{a_1(a_1+\eta)}=2+\frac{\eta^2}{a_1(a_1+\eta)}>2,
$$
for all $|\eta|\neq 0,$ so $\lambda_0$ is a hyperbolic point for $J_{0,\eta}$ for $\eta\neq 0$. Choosing $|\eta|$ sufficiently small such that no non-degenerate parabolic points can degenerate (see Step One),
this implies that the total degeneracy of the roots at must have decreased by at least one.

Repeating the procedure finitely many times, we can ensure that all roots of ${\rm Tr}M_{\epsilon,\eta}(\lambda)-2$ are simple for sufficiently small non-zero $(\epsilon,\eta)$. Note that in each step, $\epsilon$ or $\eta$ may be chosen arbitrarily small. Since the set of non-degenerate points is open by Step 1, the set of points $(\epsilon,\eta)$ is an open dense set.
\end{proof}

 As a consequence of the above, we obtain the following theorem.
\begin{thm}
For almost all choices of parameters $(a_1,\dots,a_T,b_1,\dots,b_T)\in \left(\mathbb{R}^+\right)^T\times\mathbb{R}^T$ the essential spectrum
(which equals the absolutely continuous spectrum) of the associated Hermitian $T$-periodic Jacobi matrix consists of $T$ distinct real
intervals.
\end{thm}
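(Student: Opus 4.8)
The plan is to show that the set of ``degenerate'' parameters --- those $(a_1,\dots,a_T,b_1,\dots,b_T)$ for which $\sigma_{ess}$ fails to be a union of $T$ distinct intervals --- is contained in the zero set of a single, not-identically-zero real-analytic function on $(\mathbb R^+)^T\times\mathbb R^T$, and hence has Lebesgue measure zero. The non-triviality of that function is exactly what the preceding lemma supplies.

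First I would fix notation: for $(a,b)\in(\mathbb R^+)^T\times\mathbb R^T$ write $\Delta(\lambda):=\mathrm{Tr}\,M(\lambda)=m_{11}(\lambda)+m_{22}(\lambda)$ for the discriminant of the monodromy matrix of Corollary~\ref{per71}. It is classical (see \cite{6}) that $\sigma_{ess}(J_T)=\sigma_{a.c.}(J_T)=\{\lambda\in\mathbb R:\ |\Delta(\lambda)|\le 2\}$ and that the degree-$2T$ polynomial $\Delta(\lambda)^2-4$ has all $2T$ of its zeros real --- they are the eigenvalues of the periodic and anti-periodic problems on one period cell, i.e.\ of $T\times T$ Hermitian matrices. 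A short sign-change analysis of $\Delta^2-4$ along $\mathbb R$ then shows that $\{|\Delta|\le 2\}$ consists of exactly $T$ distinct real intervals if and only if all $2T$ zeros of $\Delta(\lambda)^2-4$ are simple: a closed gap or a collapsed band forces a multiple zero, and conversely $T$ disjoint nondegenerate bands provide $2T$ distinct band edges, hence $2T$ distinct (necessarily simple) zeros. So $J_T$ is nondegenerate precisely when
\[
\delta(a,b):=\mathrm{disc}_{\lambda}\bigl(\Delta(\lambda)^2-4\bigr)\neq 0 .
\]

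Next I would observe that $\delta$ is real-analytic on the connected open set $(\mathbb R^+)^T\times\mathbb R^T$: by Corollary~\ref{per71} the coefficients of $\Delta(\lambda)$, hence of $\Delta(\lambda)^2-4$, are polynomials in $b_1,\dots,b_T$ divided by monomials in $a_1,\dots,a_T$, and the discriminant is a polynomial in these coefficients divided by the leading coefficient $(\prod_s a_s)^{-2}$; thus $\delta$ is a rational function of $(a,b)$ with no poles on $(\mathbb R^+)^T\times\mathbb R^T$. The crucial point is that $\delta\not\equiv 0$, and this is precisely the preceding lemma: applied to any fixed base operator (say $a_i\equiv 1$, $b_i\equiv 0$) it produces an open dense set $D$ of $(\epsilon,\eta)$ for which $J_{\epsilon,\eta}$ --- itself a $T$-periodic Jacobi matrix with parameters in $(\mathbb R^+)^T\times\mathbb R^T$ --- has essential spectrum equal to $T$ distinct intervals, i.e.\ satisfies $\delta\neq 0$.

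Finally, I would invoke the standard fact that the zero set of a real-analytic function that is not identically zero on a connected open subset of $\mathbb R^{2T}$ has Lebesgue measure zero. Since the degenerate parameters form a subset of $\{\delta=0\}$, they constitute a null set, and for almost every $(a_1,\dots,a_T,b_1,\dots,b_T)$ the essential spectrum of the associated Hermitian $T$-periodic Jacobi matrix is a union of $T$ distinct real intervals. The only substantial ingredient is the assertion $\delta\not\equiv 0$, which is the ``stability of the nondegenerate case'' half of the preceding lemma already in hand; the remaining points --- the classical Floquet picture, the equivalence ``$T$ distinct intervals $\Leftrightarrow\ \delta\neq 0$'', and the rational dependence of $\delta$ on the parameters --- are routine, the one place deserving a moment's care being the classical fact that $\Delta^2-4$ has no non-real zeros.
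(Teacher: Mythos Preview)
Your argument is correct and in fact supplies the details that the paper omits: the paper gives no proof beyond the sentence ``As a consequence of the above, we obtain the following theorem.'' Taken literally, the preceding lemma only yields that the nondegenerate parameters form an open dense subset of a two-parameter slice, which by itself does not give a measure-zero complement. Your discriminant argument is exactly the right bridge: encode ``$T$ distinct intervals'' as $\mathrm{disc}_\lambda(\Delta^2-4)\neq 0$, observe that this discriminant is a rational (hence real-analytic) function of $(a,b)$ with no poles on $(\mathbb R^+)^T\times\mathbb R^T$, use the lemma to exhibit one parameter point where it is nonzero, and then invoke the standard fact that the zero set of a nontrivial real-analytic function on a connected open set is Lebesgue-null. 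The supporting claims --- that $\Delta^2-4$ has only real roots because they are eigenvalues of the Hermitian periodic/antiperiodic $T\times T$ problems, and that simplicity of all $2T$ roots is equivalent to $T$ disjoint bands --- are standard Floquet facts and correctly identified.

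One small misattribution: in your closing sentence you credit $\delta\not\equiv 0$ to the ``stability of the nondegenerate case'' half of the lemma. It is actually the \emph{instability of the degenerate case} (Step Two, density) that produces a concrete nondegenerate operator and hence a point where $\delta\neq 0$; the stability half (Step One, openness) is automatic from continuity and plays no role in your argument. You invoke density correctly earlier, so this is only a labeling slip, not a logical gap.
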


\section{Solutions to period-T difference equations}\label{sec3}

In this section, and the next, the subsidiary functions our eigenvector will depend upon are defined. From now on until the end of the paper
we will assume for simplicity that $b_i=0$ for all $i$. This restriction is simple to remove.

Firstly, define $B_j(\lambda):=\left(\begin{array}{cc}
0&1\\
-\frac{a_{j-1}}{a_{j}}&\frac{\lambda}{a_{j}}\end{array}\right)$, where $j\in\{1,\dots,T\},$ and
$$M(\lambda):=B_{T}(\lambda)B_{T-1}(\lambda)\dots B_1(\lambda).$$ Then, if $\lambda\in\sigma_{ell}(J_T)$ we have that
$\sigma(M(\lambda))=\{e^{i\theta(\lambda)},e^{-i\theta(\lambda)}\}$ for some real-valued function $\theta(\lambda)$ (the quasi-momentum).
Therefore there exists an invertible matrix $V$ such that $M=V^{-1}\left(\begin{array}{cc}
 \mu&0\\
 0&\overline{\mu}\end{array}\right)V,$ where

 \begin{equation}\label{quasi}
 \mu(\lambda)=e^{i\theta(\lambda)}.
 \end{equation}

\begin{lemma}\label{lem39}
Let $\lambda\in\sigma_{ell}(J_T)$. Then, for any non-zero solution, $(\psi_n)_{n\geq 1}$, to the period-$T$ difference equation,
$a_{n-1}u_{n-1}+a_nu_{n+1}=\lambda u_n, n> 1$, we have the expression
$$(Im(\psi_n))^2=\eta_s(\lambda)\sin(2(k-1)\theta(\lambda)+\phi_s(\lambda))+\gamma_s(\lambda),$$ where $n=T(k-1)+s$, $s\in\{0,\dots,T-1\}$
and $\eta_s,\gamma_s$ are real functions which, along with $\phi_s$, are independent of $k$ and $\theta(\lambda)$ is given by
Equation~\eqref{quasi}.
\end{lemma}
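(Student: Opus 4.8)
The plan is to convert the three-term recurrence into a first-order transfer-matrix system, use the $T$-periodicity of the coefficients to make a power of the monodromy matrix $M(\lambda)$ appear, diagonalise it via the elliptic structure, and finally extract the imaginary part and square it.

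First I would pass to the transfer-matrix formalism. Setting $U_n:=\binom{\psi_n}{\psi_{n+1}}$, the equation $a_{n-1}\psi_{n-1}+a_n\psi_{n+1}=\lambda\psi_n$ (valid for $n>1$) is equivalent to $U_n=B_n(\lambda)U_{n-1}$ for $n\geq2$, with the $B_j$ of Section~\ref{sec3} and $a_0:=a_T$; hence $U_n=\big(B_n(\lambda)\cdots B_2(\lambda)\big)U_1$. Fix $n\geq1$ and write $n=T(k-1)+s$ with $k\geq1$, $s\in\{0,\dots,T-1\}$. By $T$-periodicity of the transfer matrices, $B_{j+T}(\lambda)=B_j(\lambda)$, so the full product factors as $B_n(\lambda)\cdots B_1(\lambda)=\big(B_s(\lambda)\cdots B_1(\lambda)\big)\,M(\lambda)^{k-1}$ (the first bracket empty when $s=0$, and $M(\lambda)^{k-1}=B_{T(k-1)}(\lambda)\cdots B_1(\lambda)$). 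Therefore
\[
U_{T(k-1)+s}=\big(B_s(\lambda)\cdots B_1(\lambda)\big)\,M(\lambda)^{k-1}\,w,\qquad w:=B_1(\lambda)^{-1}\binom{\psi_1}{\psi_2},
\]
where $w$ and $B_s(\lambda)\cdots B_1(\lambda)$ depend on $\lambda$ and $s$ but not on $k$. Getting this period-index bookkeeping exactly right — and checking it on the small cases $k=1$ and $s=0$ — is the one genuinely fiddly point; everything after it is routine.

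Next I would feed in the diagonalisation available because $\lambda\in\sigma_{ell}(J_T)$: by the discussion preceding the lemma $M(\lambda)=V^{-1}\,\mathrm{diag}(\mu,\overline{\mu})\,V$ with $\mu=e^{i\theta(\lambda)}$, $\theta(\lambda)\in\mathbb{R}$, so $M(\lambda)^{k-1}=V^{-1}\,\mathrm{diag}\big(e^{i(k-1)\theta},e^{-i(k-1)\theta}\big)\,V$. Putting $G_s(\lambda):=\big(B_s(\lambda)\cdots B_1(\lambda)\big)V^{-1}$ and $\binom{\alpha}{\beta}:=Vw$, and reading off the first component of $U_{T(k-1)+s}$, one gets
\[
\psi_{T(k-1)+s}=A_s(\lambda)\,e^{i(k-1)\theta(\lambda)}+D_s(\lambda)\,e^{-i(k-1)\theta(\lambda)},
\]
with $A_s:=[G_s(\lambda)]_{11}\,\alpha$ and $D_s:=[G_s(\lambda)]_{12}\,\beta$ complex numbers depending only on $\lambda$ and $s$.

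Finally I would take imaginary parts and square. For fixed complex $A_s,D_s$ one has $Im\big(\psi_{T(k-1)+s}\big)=c_1(\lambda)\cos\big((k-1)\theta\big)+c_2(\lambda)\sin\big((k-1)\theta\big)$ with $c_1=Im(A_s+D_s)$, $c_2=Re(A_s-D_s)$ real and $k$-independent; writing this as $\rho_s(\lambda)\sin\big((k-1)\theta(\lambda)+\delta_s(\lambda)\big)$ with $\rho_s=\sqrt{c_1^2+c_2^2}\geq0$, squaring, and applying $\sin^2x=\tfrac12(1-\cos2x)$ together with $-\cos x=\sin\big(x-\tfrac{\pi}{2}\big)$ yields
\[
\big(Im\,\psi_{T(k-1)+s}\big)^2=\tfrac{\rho_s(\lambda)^2}{2}\,\sin\!\big(2(k-1)\theta(\lambda)+2\delta_s(\lambda)-\tfrac{\pi}{2}\big)+\tfrac{\rho_s(\lambda)^2}{2},
\]
which is precisely the asserted identity with $\eta_s(\lambda)=\gamma_s(\lambda)=\rho_s(\lambda)^2/2$ and $\phi_s(\lambda)=2\delta_s(\lambda)-\pi/2$, all real and independent of $k$. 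I expect no obstacle beyond this elementary trigonometry; the only thing to keep watching throughout is that every quantity introduced ($w$, $G_s$, $A_s$, $D_s$, $\rho_s$, $\delta_s$) is genuinely $k$-independent.
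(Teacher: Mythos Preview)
Your proposal is correct and follows essentially the same route as the paper's own proof: pass to transfer matrices, factor out $M(\lambda)^{k-1}$ via periodicity, diagonalise using the elliptic hypothesis to write $\psi_{T(k-1)+s}$ as a combination of $e^{\pm i(k-1)\theta}$ with $k$-independent coefficients, then take the imaginary part and square using double-angle identities. The only cosmetic differences are that the paper works directly from an initial vector $\binom{\psi_0}{\psi_1}$ rather than your $w=B_1^{-1}\binom{\psi_1}{\psi_2}$ (which amounts to the same thing), and the paper squares the $\widetilde\alpha_s\sin+\widetilde\beta_s\cos$ form directly rather than first rewriting it as $\rho_s\sin((k-1)\theta+\delta_s)$; both routes yield $\eta_s=\gamma_s=\tfrac12(\widetilde\alpha_s^2+\widetilde\beta_s^2)$.
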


\begin{proof}
Since $\psi_n$ satisfies the difference equation, and $n=T(k-1)+s$ with $s\in\{0,\dots,T-1\}$ we have for any $\left(\begin{array}{c}\psi_0\\
\psi_1\end{array}\right)\in\mathbb{C}^2\setminus\left(\begin{array}{c}0\\ 0 \end{array}\right)$
\begin{align*}
\left(\begin{array}{c}
\psi_n\\
\psi_{n+1}\end{array}\right)&=B_s\dots B_1M^{(k-1)}\left(\begin{array}{c}
\psi_0\\
\psi_1\end{array}\right)\\
&=B_s\dots B_1V\left(\begin{array}{cc}
\mu^{(k-1)}(\lambda)&0\\
0&\overline{\mu}^{(k-1)}(\lambda)\end{array}\right)V^{-1}\left(\begin{array}{c}
\psi_0\\
\psi_1\end{array}\right)\\
&=\left(\begin{array}{c}
\alpha_s(\lambda)e^{i(k-1)\theta(\lambda)}+\beta_s(\lambda)e^{-i(k-1)\theta(\lambda)}\\
\kappa_s(\lambda)e^{i(k-1)\theta(\lambda)}+\chi_s(\lambda)e^{-i(k-1)\theta(\lambda)}\end{array}\right),
\end{align*}
for some functions $\alpha_s,\beta_s,\kappa_s,\chi_s$ of $\lambda\in\sigma_{ell}(J_T)$ and $s$. In the case of $s=0$ we interpret $B_0\dots
B_1$ to equal the identity, and $B_1\dots B_1=B_1$. Consequently,
$$\psi_n=\psi_{T(k-1)+s}={\alpha_s}(\lambda)e^{i(k-1)\theta(\lambda)}+{\beta_s}(\lambda)e^{-i(k-1)\theta(\lambda)}.$$ Thus,
\begin{equation}\label{172}
Im(\psi_n)=\widetilde{\alpha_s}(\lambda)\sin((k-1)\theta)+\widetilde{\beta_s}(\lambda)\cos((k-1)\theta),
\end{equation}
where $\widetilde{\alpha}_s(\lambda):=Re(\alpha_s(\lambda))-Re(\beta_s(\lambda))$,
$\widetilde{\beta}_s(\lambda):=Im(\alpha_s(\lambda))+Im(\beta_s(\lambda))$ are real-valued functions of $\lambda$.

Furthermore, using the double-angle formulae, $\sin(2x)=2\sin(x)\cos(x)$ and $\cos(2x)=\cos^2(x)-\sin^2(x)$, we have
\begin{align*}
\left(Im(\psi_n)\right)^2&=\widetilde{\alpha_s}^2\sin^2((k-1)\theta)+\widetilde{\beta_s}^2\cos^2((k-1)\theta)\\
&\hspace{10pt}+\widetilde{\alpha_s}\widetilde{\beta_s}\left(2\sin((k-1)\theta)\cos((k-1)\theta)\right)\\
&=\eta_s\sin(2(k-1)\theta+\phi_s)+\gamma_s,
\end{align*}
where $\eta_s,\phi_s$ and $\gamma_s$ are real-valued.
\end{proof}

\begin{remark}
Clearly, by suitably choosing  $\psi_0,\psi_1$ the vector $(\eta_0,\dots,\eta_{T-1})$ can be arranged to be non-trivial.
\end{remark}

\begin{lemma}\label{per31}
Let $\lambda\in\sigma_{ell}(J_T)$. Then, there exists a particular non-zero solution, $(\varphi_n)_{n\geq1}$, to the period-$T$ difference
equation $a_{n-1}u_{n-1}+a_{n}u_{n+1}=\lambda u_n, n>1$ which has the property \begin{equation}\label{4.6}
\varphi_n(\lambda)={\varphi}_s(\lambda) e^{i(k-1)\theta(\lambda)}\end{equation} for some non-trivial set of functions
$({\varphi}_s)_{s=0}^{T-1}$, where $n=T(k-1)+s, s\in\{0,\dots,T-1\}$.
\end{lemma}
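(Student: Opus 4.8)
The plan is to realise $\varphi$ as a Floquet--Bloch solution, i.e.\ to start from an eigenvector of the monodromy matrix $M(\lambda)$ and propagate it one period at a time. Since $\lambda\in\sigma_{ell}(J_T)$, Definition~\ref{genint} guarantees that $M(\lambda)$ has two \emph{distinct} eigenvalues $\mu(\lambda)=e^{i\theta(\lambda)}$ and $\overline{\mu}(\lambda)=e^{-i\theta(\lambda)}$, so there is a non-zero vector $\binom{\varphi_0(\lambda)}{\varphi_1(\lambda)}\in\mathbb{C}^2$ with $M(\lambda)\binom{\varphi_0}{\varphi_1}=\mu(\lambda)\binom{\varphi_0}{\varphi_1}$. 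I then let $(\varphi_n)_{n\ge1}$ be the solution of $a_{n-1}u_{n-1}+a_nu_{n+1}=\lambda u_n$, $n>1$, determined by these initial data, so that (with the convention $B_0\cdots B_1=I$ from Lemma~\ref{lem39}) $\binom{\varphi_n}{\varphi_{n+1}}=B_n\cdots B_1\binom{\varphi_0}{\varphi_1}$ for all $n\ge0$.

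Next I would carry out the same transfer-matrix bookkeeping as in the proof of Lemma~\ref{lem39}: writing $n=T(k-1)+s$ with $s\in\{0,\dots,T-1\}$ and using $T$-periodicity of the $B_j$, one gets $\binom{\varphi_n}{\varphi_{n+1}}=B_s\cdots B_1\,M^{k-1}\binom{\varphi_0}{\varphi_1}=\mu(\lambda)^{k-1}\,B_s\cdots B_1\binom{\varphi_0}{\varphi_1}$, the last equality using that $\binom{\varphi_0}{\varphi_1}$ is an eigenvector of $M(\lambda)$. Defining $\varphi_s(\lambda)$ to be the first component of $B_s\cdots B_1\binom{\varphi_0(\lambda)}{\varphi_1(\lambda)}$ for $s=0,\dots,T-1$ (which is consistent with the already-named $\varphi_0,\varphi_1$), this is precisely $\varphi_n(\lambda)=\varphi_s(\lambda)e^{i(k-1)\theta(\lambda)}$, i.e.~\eqref{4.6}.

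Two minor points then finish the argument. First, the formula must be self-consistent across period boundaries: for $s=T-1$, passing from $n$ to $n+1=Tk$ (so $k\mapsto k+1$ and $s\mapsto0$) requires $\varphi_{n+1}=\varphi_0\,e^{ik\theta}$; this holds because the second component of $B_{T-1}\cdots B_1\binom{\varphi_0}{\varphi_1}$ equals the first component of $B_TB_{T-1}\cdots B_1\binom{\varphi_0}{\varphi_1}=M(\lambda)\binom{\varphi_0}{\varphi_1}=\mu(\lambda)\binom{\varphi_0}{\varphi_1}$, namely $\mu(\lambda)\varphi_0$, giving $\varphi_{n+1}=\mu(\lambda)^{k-1}\cdot\mu(\lambda)\varphi_0=\varphi_0\,e^{ik\theta(\lambda)}$. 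Second, non-triviality: $\binom{\varphi_0}{\varphi_1}\neq0$ and each $B_j$ is invertible ($\det B_j=a_{j-1}/a_j\neq0$), so no $\binom{\varphi_n}{\varphi_{n+1}}$ vanishes; hence $(\varphi_n)_{n\ge1}$ is not identically zero and at least one of $\varphi_0,\varphi_1$, and therefore one of the $\varphi_s$, is non-zero. I do not expect a genuine obstacle here: all of the content lies in the ellipticity of $\lambda$ (which is exactly what provides a true eigenvector of $M(\lambda)$, as opposed to a Jordan block) together with the computation already done in Lemma~\ref{lem39}; only the period-boundary consistency and the convention $B_0\cdots B_1=I$ require comment, both supplied above.
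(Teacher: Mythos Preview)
Your argument is correct and is essentially the same as the paper's: both take an eigenvector of $M(\lambda)$ for the eigenvalue $\mu(\lambda)=e^{i\theta(\lambda)}$, propagate it via $B_s\cdots B_1 M^{k-1}$, and read off \eqref{4.6}. Your additional checks of period-boundary consistency and non-triviality of $(\varphi_s)_{s=0}^{T-1}$ are not spelled out in the paper's proof but are harmless refinements of the same idea.
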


\begin{proof}
Recall from above that $M(\lambda)$ has eigenvalues $e^{\pm i\theta(\lambda)},$ i.e. $$M(\lambda)\left(\begin{array}{c}
\varphi_{0}\\
\varphi_{1}\end{array}\right)=e^{i\theta(\lambda)}\left(\begin{array}{c}
\varphi_{0}\\
\varphi_{1}\end{array}\right),$$ for some $\varphi_0,\varphi_1$. Define $\varphi_{2},\dots,\varphi_{T-1}$ by $$\left(\begin{array}{c}
{\varphi}_{s}\\
{\varphi}_{s+1}\end{array}\right):=B_{s}B_{s-1}\dots B_1\left(\begin{array}{c}
\varphi_{0}\\
\varphi_{1}\end{array}\right). $$ Then using the notation $n=T(k-1)+s$,

\begin{align*}
\left(\begin{array}{c}
\varphi_{n}\\
\varphi_{n+1}\end{array}\right)&=B_{s}\dots B_1 M^{(k-1)}\left(\begin{array}{c}
\varphi_{0}\\
\varphi_{1}\end{array}\right)\\
&=B_{s}\dots B_1 e^{(k-1)i\theta(\lambda)}\left(\begin{array}{c}
\varphi_{0}\\
\varphi_{1}\end{array}\right)\\
&=e^{(k-1)i\theta(\lambda)}\left(\begin{array}{c}
{\varphi}_{s}\\
{\varphi}_{s+1}\end{array}\right).
\end{align*}
Consequently, $$\varphi_n={\varphi}_s e^{(k-1)i\theta(\lambda)}.\qedhere$$
\end{proof}

\begin{remark}
Henceforth, the eigenvector of the monodromy matrix will be normalized with $\varphi_0=1$. Subsequent calculations in Lemma~\ref{per32} will
confirm the validity of this choice for almost every $\lambda$.
\end{remark}

\section{Properties of the function $C(\lambda;T)$}
 In this section we introduce a new, analytic function of $\lambda$, $C(\lambda;T)$. This will play an important role in the asymptotic
 expansion of our eigenvector, $(u_n)$. Its zeros will give values of $\lambda$ where our construction fails. Here we explore its properties
 and structure.
 \begin{mydef}\label{c4.1}
 For $\lambda\in\sigma_{ell}(J_T)$, let
 $C(\lambda;T):=Re\left(\sum\limits_{s=1}^{T}{\varphi}_s(\lambda)\overline{\varphi}_{s-1}(\lambda)\right),$ where ${\varphi}_s$ are as in
 Lemma~\ref{per31}.
 \end{mydef}

Note that Definition~\ref{c4.1}~is invariant w.r.t. the choice of branches $\mu$ and $\overline{\mu}$ on $\sigma_{ell}(J_T)$. Indeed, since
$\lambda\in\sigma_{ell}(J_T)\subset\mathbb{R}$, all matrix elements of $B_s(\lambda),s=1,2,\dots,T$ and $M(\lambda)$ are real polynomials,
$\varphi_0=1$ and $$\varphi_1(\lambda)=\left(\mu-m_{11}(\lambda)\right)m_{12}^{-1}(\lambda)$$ changes under the transformation $\mu\mapsto
\overline{\mu}$ to the complex conjugate function $\varphi_{1}(\lambda)\mapsto\overline{\varphi}_1(\lambda),\lambda\in\sigma_{ell}(J_T)$. For
the last fact the inclusion $\lambda\in\sigma_{ell}(J_T)$ is essential. Hence for all $s=1,\dots,T$
$\varphi_s(\lambda)\mapsto\overline{\varphi}_s(\lambda)$ and the expression for $C(\lambda;T)$, $\lambda\in\sigma_{ell}(J_T)$, transforms
into
\begin{align*}
Re\left(\sum\limits_{i=1}^T \overline{\varphi}_s(\lambda)\varphi_{s-1}(\lambda)\right)\ =\ Re\left(\overline{\sum\limits_{i=1}^T
{\varphi_s(\lambda)}\overline{\varphi}_{s-1}(\lambda)}\right) \ =\ C(\lambda;T).
\end{align*}

\begin{mydef}
Consider a rational function in the variable $x$ of the form $\frac{P(x)}{Q(x)}$ where $P(x), Q(x)$ are polynomials. The order of the rational function is defined to be the difference in degree of the polynomials $P(x)$ and $Q(x)$: $\deg P - \deg Q$.
\end{mydef}

\begin{lemma}\label{per32}
The function $C(\lambda;T)$ is a rational function on $\sigma_{ell}(J_T)$ and can be extended, uniquely, as a rational function to
$\mathbb{C}$.
\end{lemma}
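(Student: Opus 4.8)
The plan is to make explicit how the functions $\varphi_s$ depend on the eigenvalue $\mu=e^{i\theta(\lambda)}$ of the monodromy matrix $M(\lambda)$, and then to observe that in the particular combination $\sum_{s=1}^{T}\varphi_s\overline{\varphi}_{s-1}$ defining $C(\lambda;T)$ all non-rational dependence on $\mu$ cancels. Concretely, I would first show by induction on $s$ that for $\lambda\in\sigma_{ell}(J_T)$ and $s=0,1,\dots,T$ one can write
$$\varphi_s(\lambda)=A_s(\lambda)+B_s(\lambda)\,\mu(\lambda),$$
where $A_s$ and $B_s$ are real rational functions of $\lambda$ defined without reference to any branch of $\mu$. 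The base cases are $\varphi_0=1$, so $A_0=1$, $B_0=0$, and $\varphi_1=(\mu-m_{11})\,m_{12}^{-1}$, so $A_1=-m_{11}/m_{12}$, $B_1=1/m_{12}$; this is legitimate because by Corollary~\ref{per71} the entries $m_{11},m_{12}$ are real polynomials and $m_{12}(\lambda)\neq 0$ on $\sigma_{ell}(J_T)$ (if $m_{12}(\lambda)=0$ then $M(\lambda)$ would be triangular with eigenvalues $m_{11}(\lambda)$ and $m_{11}(\lambda)^{-1}$, which cannot both have modulus one and be distinct). The inductive step follows at once from the difference equation in the form $\varphi_{s+1}=\left(\lambda\varphi_s-a_{s-1}\varphi_{s-1}\right)/a_s$: this expresses $\varphi_{s+1}$ as a combination of $\varphi_{s-1}$ and $\varphi_s$ with real rational coefficients and, crucially, involves no products of the $\varphi$'s, so no power $\mu^2$ is ever generated. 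For $s=T$ this is consistent with $\varphi_T=\mu\varphi_0=\mu$, which also follows directly from the eigenvector equation for $M(\lambda)$.

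Next I would fix $\lambda\in\sigma_{ell}(J_T)\subset\mathbb{R}$, so that the $A_s,B_s$ take real values, $|\mu|=1$, and $2\,Re(\mu)=\mu+\overline{\mu}=Tr\left(M(\lambda)\right)=:\Delta(\lambda)$, a real polynomial in $\lambda$. Since then $\overline{\varphi}_{s-1}=A_{s-1}+B_{s-1}\overline{\mu}$, expanding the product and taking real parts using $|\mu|^2=1$ and $Re(\mu)=Re(\overline{\mu})=\Delta(\lambda)/2$ yields
$$Re\left(\varphi_s\overline{\varphi}_{s-1}\right)=A_sA_{s-1}+B_sB_{s-1}+\tfrac{1}{2}\Delta(\lambda)\left(A_sB_{s-1}+B_sA_{s-1}\right).$$
Summing over $s=1,\dots,T$ then presents $C(\lambda;T)$ as a finite sum of products of rational functions of $\lambda$, hence as a rational function, which is the first claim. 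The right-hand side of the last display is, moreover, a well-defined element of $\mathbb{C}(\lambda)$, so it defines a rational function on all of $\mathbb{C}$ that restricts to $C(\cdot;T)$ on $\sigma_{ell}(J_T)$; this is the asserted extension. For uniqueness it suffices that $\sigma_{ell}(J_T)$ be infinite, and indeed $\sigma_{ell}(J_T)=\{\lambda\in\mathbb{R}:|\Delta(\lambda)|<2\}$ is open in $\mathbb{R}$ and non-empty (otherwise the spectrum of $J_T$ would be the finite set $\{\lambda\in\mathbb{R}:|\Delta(\lambda)|=2\}$, contradicting the fact that a $T$-periodic Jacobi operator with positive off-diagonal entries has non-empty absolutely continuous spectrum), hence uncountable, so two rational functions agreeing there must coincide.

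The computations themselves are routine; the steps that call for genuine care are the bookkeeping in the induction — checking that no $\mu^2$ term is generated and that the coefficients stay real and rational — and the use of the membership $\lambda\in\sigma_{ell}(J_T)$ exactly where it matters, namely in the two identities $|\mu|=1$ and $\mu+\overline{\mu}=Tr\left(M(\lambda)\right)$, the second of which relies on $1/\mu=\overline{\mu}$ and therefore genuinely needs $\lambda$ to be an elliptic point. The one ingredient external to the present section is the non-emptiness of $\sigma_{ell}(J_T)$, which I would take from the classical band structure of periodic Jacobi operators and which is what guarantees uniqueness of the rational extension.
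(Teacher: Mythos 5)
Your argument is correct and follows essentially the same route as the paper: you write $\varphi_s=A_s+B_s\mu$ with real rational $A_s,B_s$ (the paper reads this decomposition off the explicit polynomial form of the transfer-matrix products in Corollary~\ref{per71}, while you derive it by induction on the recurrence) and then eliminate the non-rational dependence on $\mu$ via $\mu\overline{\mu}=1$ and $\mu+\overline{\mu}={\rm Tr}(M(\lambda))$. Your treatment is, if anything, slightly more complete, since you make the uniqueness of the rational extension explicit by noting that $\sigma_{ell}(J_T)$ is an infinite set.
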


\begin{proof}

Let $\lambda\in\sigma_{ell}(J_T)$. First, the special cases of $T=1$ and $T=2$ must be considered separately. For $T=1$, ${\varphi}_0=1$ and
${\varphi}_1=\mu$, so
\begin{align*}
C(\lambda;1)&=Re\left({\varphi}_1\overline{{\varphi}}_0\right)=Re(\mu)=\frac{{\rm Tr}(M(\lambda))}{2}=\frac{\lambda}{2a_1}.
\end{align*}

For $T=2$ we have $$M(\lambda)=\left(\begin{array}{cc}
-\frac{a_2}{a_1}&\frac{\lambda}{a_1}\\
-\frac{\lambda}{a_1}&\frac{\lambda^2}{a_1a_2}-\frac{a_1}{a_2}
\end{array}\right).$$ By defining $\varphi_0:=1, \varphi_1$ is such that $$M(\lambda)\left(\begin{array}{c}
\varphi_{0}\\
\varphi_{1}\end{array}\right)=e^{i\theta(\lambda)}\left(\begin{array}{c}
\varphi_{0}\\
\varphi_{1}\end{array}\right).$$ Consequently, $$\varphi_1=\frac{a_1\mu+a_2}{\lambda}, \varphi_2=\mu.$$ Then, using $\mu\overline{\mu}=1,$
\begin{align}
C(\lambda;2)&=Re\left({\varphi}_2\overline{{\varphi}}_1+{\varphi}_1\overline{{\varphi}}_{0}\right)\label{cl2}\\
&=Re\left(\mu\frac{a_1\overline{\mu}+a_2}{\lambda}+\frac{a_1\mu+a_2}{\lambda}\right)\nonumber\\
&=\frac{(a_1+a_2)}{\lambda}\left(1+Re(\mu)\right)=\frac{(a_1+a_2)}{\lambda}\left(1+\frac{{\rm Tr}(M(\lambda))}{2}\right)\nonumber\\
&=\frac{(a_1+a_2)}{2\lambda a_1a_2}\left(\lambda^2-(a_1^2+a_2^2)+2a_1a_2\right)=\frac{(a_1+a_2)}{2\lambda
a_1a_2}\left(\lambda^2-|a_1-a_2|^2\right).\nonumber
\end{align}
   Thus, the assertion holds for both of these cases.

For $T\geq 3$ we define $\varphi_0:=1$ and follow a similar technique to the case for $T=2$. Here we see that the normalisation $\varphi_0=1$ is
valid unless $m_{12}(\lambda)=0$. Consequently, for $m_{12}(\lambda)\neq 0,$ $$\varphi_1=\frac{\mu-m_{11}}{m_{12}},$$ where $m_{11},m_{12}$
are as described in Corollary~\ref{per71}. Throughout the proof $P_k$ will denote a polynomial of at most degree $k$, while
$R_k,\widetilde{R}_k$ will denote rational functions of order at most $k$. Using Lemma~\ref{per71}, again, and a similar calculation as in the case of Lemma~\ref{per31}, for $s=2,\dots,T$ we obtain
$$\varphi_s=\left(\frac{\lambda^{s-1}}{\prod_{j=1}^{s-1}
a_j}+~P_{s-2}~\right)\frac{\mu-m_{11}}{m_{12}}+\left(-\frac{a_T\lambda^{s-2}}{\prod_{j=1}^{s-1} a_j}+~{P}_{s-3}~\right),$$
 where $P_{-1}$ and $\widetilde{P}_{-1}$ are both identically zero. Note that $Im(\mu)$ is an algebraic but not rational function of
$\lambda$. Indeed, ${\rm Tr}(M(\lambda))$ is a polynomial in $\lambda$ and $det(M(\lambda))=1$, therefore $Im(\mu)$ is the root of
$\left(\frac{{\rm Tr}(M(\lambda))}{2}\right)^2+1$, which would be the square of a rational function iff ${\rm Tr}\left(M(\lambda)\right)$ were equal to a
constant. However, since $\mu\overline{\mu}=1$ and $$Re(\mu)=Re(\overline{\mu})=\frac{{\rm Tr}(M(\lambda))}{2},$$
$Re\left(\varphi_s(\lambda)\overline{\varphi}_{s-1}(\lambda)\right)$ is clearly a rational function of $\lambda$, $C(\lambda;T)$ is also a
rational function of $\lambda$, $\lambda\in\sigma_{ell}(J_T)$. Now we see that $C(\lambda;T)$ is well-defined as an analytic function not
only on $\sigma_{ell}(J_T)$, but everywhere on $\mathbb{C}$ except at the roots of $m_{12}(\lambda)$.
\end{proof}

\begin{remark}  The function $C(\lambda;T)$ only fails to be defined when the polynomial $m_{12}(\lambda)$, defined in Corollary~\ref{per71},
is equal to 0. For $\lambda\in\sigma_{ell}(J_T)$ we have $m_{12}(\lambda)\neq 0$ since if $m_{12}(\lambda)=0$ then the eigenvalues of the
monodromy matrix for real $\lambda$ are real, and as usual their product is $1$. Indeed, since $m_{12}(\lambda)=0$, we have that the
monodromy matrix is lower-triangular and therefore $m_{11}(\lambda)$ and $ m_{22}(\lambda)$ are the (real) eigenvalues. Thus, $\lambda$ is either
in the hyperbolic or parabolic case, contradicting that $\lambda\in\sigma_{ell}(J_T)$, and so the denominator has no roots in
$\sigma_{ell}(J_T)$.
\end{remark}

Our technique for embedding eigenvalues fails for values $\lambda$ when the function $C(\lambda;T)=0$. It is important to understand when
this situation arises.

\begin{remark}
For the case $T=1$, the function $C(\lambda;1)$ has only one root at $\lambda=0$.
From Equation~\eqref{cl2}~ we know that for the case $T=2$ the function $C(\lambda;2)$ has no zeros for $\lambda\in\sigma_{ell}(J_2)$ as its
two roots, $\lambda_{\pm}=\pm|a_1-a_2|$, are parabolic points.
For the case $T=3$ the function
\begin{equation}\label{4.21}C(\lambda;3)=\frac{\lambda\left(\frac{1}{a_1}+\frac{1}{a_2}+\frac{1}{a_3}\right)}{2(\lambda^2-a_1^2)}\left(\lambda^2-(a_1^2+a_2^2+a_3^2)+\frac{2(a_1+a_2+a_3)}{\frac{1}{a_1}+\frac{1}{a_2}+\frac{1}{a_3}}\right)\end{equation}
has a zero at $\lambda=0$. In order to preclude any other roots in the generalised interior of the a.c. spectrum (see Definition \ref{genint}) it is sufficient to
establish that $|{\rm Tr}(M(\lambda))|\geq 2$ whenever $C(\lambda;3)=0$. A simple calculation shows that this is equivalent to
\begin{multline*}g(a_1,a_2):=(a_1^3+a_1^3a_2+a_2^3+a_1a_2^3+a_2+a_1-a_1^2a_2-a_1a_2^2-a_1a_2)(a_1+a_2+1)^2\\-(a_1+a_2+a_1a_2)^3\geq 0, \end{multline*} where, by homogeneity, w.l.o.g~$a_3=1$. Numerical calculations of the roots of $g$ suggest that this function is non-negative for $a_1,a_2>0$. More generally, we believe that for even $T$ the function $C(\lambda;T)$ has no zeros in the
generalized interior of the a.c. spectrum, and for odd $T$ there is a single solution at $\lambda=0$.
\end{remark}

\begin{remark}
Now consider a different formula for $C(\lambda;T)$ having a ``symplectic character". Using it one can easily deduce, in a slightly different
way, the rationality of $C(\lambda;T)$. Introducing the indefinite matrix $\hat{\mathcal{J}}:=\left(\begin{array}{cc}
0&1\\
1&0 \end{array}\right)$ in $\mathbb{C}^2$ one can rewrite the expression for $C(\lambda;T)$ in the following form (we assume below that
$\lambda\in\sigma_{ell}(J_T)$):
\begin{align*}
C(\lambda;T)&=\frac{1}{2}\sum\limits_{s=1}^T
\left(\varphi_s(\lambda)\overline{\varphi}_{s-1}(\lambda)+\varphi_{s-1}(\lambda)\overline{\varphi}_s(\lambda)\right)\\
&\hspace{-10pt}=\frac{1}{2}\sum\limits_{s=1}^T\left\langle \hat{\mathcal{J}}\left(\prod\limits_{k=1}^{s-1}B_k(\lambda)\right)\left(\begin{array}{c}
\varphi_0\\
\varphi_1(\lambda)\end{array}\right),\left(\prod\limits_{k=1}^{s-1}B_k(\lambda)\right)\left(\begin{array}{c}
\varphi_0\\
\varphi_1(\lambda) \end{array}\right)\right\rangle_{\mathbb{C}^2}\\
&\hspace{-10pt}=\frac{1}{2}\sum\limits_{s=1}^T\left\langle
\left(\prod\limits_{k=1}^{s-1}B_k(\overline{\lambda})\right)^*\hat{\mathcal{J}}\left(\prod\limits_{k=1}^{s-1}B_k(\lambda)\right)\left(\begin{array}{c}
\varphi_0\\
\varphi_1(\lambda)\end{array}\right),\left(\begin{array}{c}
\varphi_0\\
\varphi_1(\overline{\lambda})\end{array}\right)\right\rangle_{\mathbb{C}^2}\\
&\hspace{-10pt}=\sum\limits_{s=1}^T\bigg\langle F_s(\lambda)\left[\left(\begin{array}{c}
1\\
-m_{11}(\lambda)m_{12}^{-1}(\lambda)\end{array}\right)+\mu(\lambda)\left(\begin{array}{c}
0\\
m_{12}^{-1}(\lambda)\end{array}\right)\right],\\
&\hspace{60pt}\left[\left(\begin{array}{c}
1\\
-m_{11}(\overline{\lambda})m_{12}^{-1}(\overline{\lambda})\end{array}\right)+\mu(\overline{\lambda})\left(\begin{array}{c}
0\\
m_{12}^{-1}(\overline{\lambda})\end{array}\right)\right]\bigg\rangle_{\mathbb{C}^2}
\end{align*}
where we denoted the real matrix polynomials
$$\frac{1}{2}\left(\prod\limits_{k=1}^{s-1}B_k(\overline{\lambda})\right)^*\hat{\mathcal{J}}\left(\prod\limits_{k=1}^{s-1}B_k(\lambda)\right)$$
by $F_s(\lambda)$, $s>1$ and $F_1(\lambda):=\frac{\hat{\mathcal{J}}}{2}$. Therefore
\begin{align}
C(\lambda;T)&=\sum\limits_{s=1}^T\bigg\{\left\langle F_s(\lambda)\left(\begin{array}{c}
1\\
-m_{11}(\lambda)m_{12}^{-1}(\lambda)\end{array}\right),\left(\begin{array}{c}
1\\
-m_{11}(\overline{\lambda})m_{12}^{-1}(\overline{\lambda})\end{array}\right)\right\rangle_{\mathbb{C}^2}\nonumber\\
&~~+{\rm Tr}\left(M(\lambda)\right)\left\langle F_s(\lambda)\left(\begin{array}{c}
1\\
-m_{11}(\lambda)m_{12}^{-1}(\lambda)\end{array}\right),\left(\begin{array}{c}
1\\
m_{12}^{-1}(\overline{\lambda})\end{array}\right)\right\rangle_{\mathbb{C}^2}\nonumber\\
&~~~+\left\langle F_s(\lambda)\left(\begin{array}{c}
1\\
m_{12}^{-1}(\lambda)\end{array}\right),\left(\begin{array}{c}
1\\
m_{12}^{-1}(\overline{\lambda})\end{array}\right)\right\rangle_{\mathbb{C}^2}\bigg\}\label{4.35}
\end{align}
where we used that ${\rm Tr}(M(\lambda))=\mu(\lambda)+{\overline{\mu}}(\lambda),\lambda\in\sigma_{ell}(J_T).$
From the last expression, taking into consideration that $F_s(\lambda), m_{11}(\lambda), m_{12}(\lambda), {\rm Tr}\left(M(\lambda)\right)$ are
polynomials in $\lambda$, we see immediately that $C(\lambda;T)$ is a rational function of $\lambda$ on $\sigma_{ell}(J_T)$ and therefore
admits unique analytic continuation as a rational function to the whole of $\mathbb{C}$, given by Formula~\eqref{4.35}. Moreover, using the
last formula one can give an upper bound for the order of $C(\lambda;T)$ as a rational function, but in the next theorem we will present an
explicit calculation of the order.
\end{remark}

\begin{thm}
The function $C(\lambda;T)$ is a rational function of $\lambda$ of order $1$. Moreover, its asymptotic expansion is given by
$$C(\lambda;T)\sim \frac{1}{2}\left(a_1^{-1}+\dots+a_T^{-1}\right)\lambda, \lambda\rightarrow\infty.$$
\end{thm}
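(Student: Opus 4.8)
The plan is to extract the leading-order behaviour of $C(\lambda;T)$ as $\lambda\to\infty$ directly from its definition, using the explicit asymptotics of the monodromy matrix entries from Corollary~\ref{per71} together with the formula $\varphi_1=(\mu-m_{11})/m_{12}$ and the recursion for $\varphi_s$ derived in the proof of Lemma~\ref{per32}. The first step is to understand the size of $\mu(\lambda)$ as $\lambda\to\infty$. Since $\mathrm{Tr}(M(\lambda))=\mu+\overline\mu$ grows like $\lambda^T/(a_1\cdots a_T)$, for large real $\lambda$ the point $\lambda$ is hyperbolic, not elliptic; so strictly speaking we are using the analytic continuation of $C(\lambda;T)$ to $\mathbb{C}$ guaranteed by Lemma~\ref{per32} (equivalently Formula~\eqref{4.35}). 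On the continuation, $\mathrm{Tr}(M(\lambda))$ and $m_{12}(\lambda)$ are the same polynomials, while $\mu$ is replaced by the branch of the root of $X^2-\mathrm{Tr}(M)X+1=0$; one checks that $|\mu|\to\infty$ like $\mathrm{Tr}(M(\lambda))$, so that $\overline\mu$ is really $\mu^{-1}\to 0$. The cleanest route is to work with the symplectic formula \eqref{4.35}, where everything is manifestly rational in $\lambda$ and in $\mathrm{Tr}(M(\lambda))$, and simply track the highest power of $\lambda$.

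Concretely, I would substitute the leading terms $m_{11}\sim -a_T\lambda^{T-2}/\prod_{s=1}^{T-1}a_s$, $m_{12}\sim \lambda^{T-1}/\prod_{s=1}^{T-1}a_s$, $m_{21}\sim -\lambda^{T-1}/\prod_{s=1}^{T-1}a_s$, $m_{22}\sim \lambda^T/\prod_{s=1}^{T}a_s$ into the recursion $\varphi_s=\big(\tfrac{\lambda^{s-1}}{\prod_{j=1}^{s-1}a_j}+P_{s-2}\big)\tfrac{\mu-m_{11}}{m_{12}}+\big(-\tfrac{a_T\lambda^{s-2}}{\prod_{j=1}^{s-1}a_j}+P_{s-3}\big)$. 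Note $m_{11}/m_{12}\sim -a_T/\lambda\to 0$, so to leading order $\varphi_s \sim \big(\tfrac{\lambda^{s-1}}{\prod_{j=1}^{s-1}a_j}\big)\tfrac{\mu}{m_{12}}$. The key quantitative input is then the asymptotic size of $\mu/m_{12}$: since $\mu\sim \mathrm{Tr}(M(\lambda)) \sim \lambda^T/\prod_{s=1}^T a_s$ and $m_{12}\sim\lambda^{T-1}/\prod_{s=1}^{T-1}a_s$, we get $\mu/m_{12}\sim \lambda/a_T$. Hence $\varphi_s\sim \lambda^{s}/(a_T\prod_{j=1}^{s-1}a_j)$ and $\varphi_{s-1}\sim \lambda^{s-1}/(a_T\prod_{j=1}^{s-2}a_j)$, so $\varphi_s\overline{\varphi}_{s-1}\sim \lambda^{2s-1}/(a_T^2 a_{s-1}\prod_{j=1}^{s-2}a_j^2)$ — but this grows with $s$, which would contradict order $1$. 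This tells me the naive leading-order estimate cancels, and I must be more careful: the genuine mechanism is that $\overline\varphi_{s-1}=\overline\mu\,(\dots)$ with $\overline\mu=\mu^{-1}\to 0$ killing the growth. So I would instead write $\varphi_s\overline\varphi_{s-1}$ keeping both $\mu$ and $\overline\mu=1/\mu$ explicitly, observe that the $\mu\overline\mu=1$ terms give a genuinely rational contribution of controlled degree, and the $\mu^2$ and $\overline\mu^2$ terms combine (via $\mathrm{Tr}(M)=\mu+\overline\mu$ and $\mathrm{Re}$) into rational functions — this is exactly the decomposition already displayed in \eqref{4.35}.

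Thus the real computation is to evaluate the leading term of \eqref{4.35}: the dominant contribution as $\lambda\to\infty$ comes from the ``$\mathrm{Tr}(M(\lambda))$" middle terms, since $\mathrm{Tr}(M(\lambda))\sim\lambda^T/\prod a_s$ is the largest scalar around, paired against $F_s(\lambda)$ (whose entries are polynomials coming from products of the $B_k$) and the vectors involving $m_{11}m_{12}^{-1}$ and $m_{12}^{-1}$. I expect that after the dust settles the surviving term is exactly $\mathrm{Re}\big(\sum_s \varphi_s\overline\varphi_{s-1}\big)$ with $\varphi_s$ replaced by its ``$\overline\mu$-part'' against $\varphi_{s-1}$'s ``$\mu$-part,'' i.e. the cross term that is $O(1)$ in $\mu$ but grows in $\lambda$; a bookkeeping of degrees shows this crossed sum telescopes/simplifies to $\tfrac12(a_1^{-1}+\dots+a_T^{-1})\lambda + O(1)$. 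A sanity check is available at every finite $T$: the formulas for $C(\lambda;1)=\lambda/(2a_1)$, $C(\lambda;2)=\tfrac{a_1+a_2}{2\lambda a_1 a_2}(\lambda^2-|a_1-a_2|^2)$, and $C(\lambda;3)$ in \eqref{4.21} all indeed have order $1$ with leading coefficient $\tfrac12\sum a_i^{-1}$, which both confirms the claim and suggests the right normalisation to chase through the general argument. The main obstacle, I expect, is precisely the cancellation noted above: one must organise the computation (best via \eqref{4.35}, or equivalently by writing $\varphi_s = \varphi_s^{(+)}\mu + \varphi_s^{(-)}$ with $\varphi_s^{(\pm)}$ rational and then using $\mathrm{Re}(\varphi_s\overline\varphi_{s-1}) = \mathrm{Re}(\varphi_s^{(+)}\overline{\varphi_{s-1}^{(+)}}) + \tfrac12\mathrm{Tr}(M)\,\mathrm{Re}(\varphi_s^{(+)}\overline{\varphi_{s-1}^{(-)}} + \varphi_s^{(-)}\overline{\varphi_{s-1}^{(+)}}) + \mathrm{Re}(\varphi_s^{(-)}\overline{\varphi_{s-1}^{(-)}})$ using $\mu\overline\mu=1$) so that the apparently high-degree pieces visibly cancel and the residual degree-$1$ term is isolated and its coefficient computed exactly.
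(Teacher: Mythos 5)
You have correctly located the difficulty --- the naive degree count on $\varphi_s\overline{\varphi}_{s-1}$ blows up with $s$, so everything hinges on a cancellation --- but your proposal stops exactly where the proof has to start: the statements ``I expect that after the dust settles the surviving term is\dots'' and ``a bookkeeping of degrees shows this crossed sum telescopes/simplifies to $\tfrac12(a_1^{-1}+\dots+a_T^{-1})\lambda+O(1)$'' are the entire content of the theorem, and neither the cancellation nor the value of the leading coefficient is actually established. Working from \eqref{4.35} alone, tracking degrees of $F_s(\lambda)$, $m_{11}m_{12}^{-1}$ and ${\rm Tr}(M(\lambda))$ only yields an \emph{upper bound} on the order of $C(\lambda;T)$ (the paper says as much in the remark preceding the theorem); it does not isolate the coefficient of $\lambda$, and your $T=1,2,3$ checks confirm the answer without proving it.

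The missing idea is a different splitting. Write $C(\lambda;T)=\sum_s\varphi_s\overline{\varphi}_{s-1}-\tfrac12\sum_s\left(\varphi_s\overline{\varphi}_{s-1}-\varphi_{s-1}\overline{\varphi}_s\right)$, i.e.\ peel off $i\,{\rm Im}$ of the sum rather than decomposing each summand into its $\mu$-- and $\overline{\mu}$--parts. The second sum is computed \emph{exactly} by the constancy of the discrete Wronskian \eqref{5.27}: $a_{s-1}(\varphi_s\overline{\varphi}_{s-1}-\varphi_{s-1}\overline{\varphi}_s)$ is independent of $s$, so the sum collapses to $\tfrac12\left(\sum_s a_s^{-1}\right)a_T(\varphi_1-\overline{\varphi}_1)=\tfrac12\left(\sum_s a_s^{-1}\right)a_T(\mu-\mu^{-1})m_{12}^{-1}$, which is $\sim\tfrac12\left(\sum_s a_s^{-1}\right)\lambda$ once one fixes the branch with $\mu(\lambda)\to0$ at infinity (legitimate by Lemma~\ref{4.20} and the invariance of $C$ under the branch choice). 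That same branch choice is then what kills the first sum: it forces $\varphi_s(\lambda)=O(\lambda^{-s})$ and $\overline{\varphi_{s-1}(\overline{\lambda})}=O(\lambda^{s-1})$, so $\sum_s\varphi_s\overline{\varphi}_{s-1}=O(\lambda^{-1})$ and contributes nothing to the leading term --- this is the precise mechanism behind the cancellation you anticipated but did not exhibit. Your instinct that something ``telescopes'' is pointing at \eqref{5.27}, but without identifying that identity (or an equivalent exact evaluation) the argument does not close.
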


\begin{proof}
{\underline{(Step One)}} For $\lambda\in\sigma_{ell}(J_T)$ we have
\begin{align*}
C(\lambda;T)&=\sum\limits_{s=1}^T
\varphi_s(\lambda){\overline{\varphi}}_{s-1}(\lambda)-iIm\left(\sum\limits_{s=1}^T\varphi_s(\lambda){\overline{\varphi}}_{s-1}(\lambda)\right)\\
&=\sum\limits_{s=1}^T
\varphi_s(\lambda){\overline{\varphi}}_{s-1}(\lambda)-\frac{1}{2}\sum\limits_{s=1}^T\left(\varphi_s(\lambda){\overline{\varphi}}_{s-1}(\lambda)-\varphi_{s-1}(\lambda){\overline{\varphi}}_{s}(\lambda)\right)
\end{align*}
Note that by the constancy of the ``discrete Wronskian" we know that (using $a_0:=a_T$)
\begin{equation}\label{5.27}a_s(\varphi_{s+1}(\lambda){\overline{\varphi}}_s(\lambda)-\varphi_s(\lambda){\overline{\varphi}}_{s+1}(\lambda))=a_{s-1}(\varphi_{s}(\lambda){\overline{\varphi}}_{s-1}(\lambda)-\varphi_{s-1}(\lambda){\overline{\varphi}}_{s}(\lambda)),\end{equation}
$s=1,2,\dots,T-1$. The last identity can be easily proved using the fact that both $\varphi_s(\lambda)$ and ${\overline{\varphi}}_s(\lambda)$
are solutions to the recurrence relations. Applying Equation~\eqref{5.27} one obtains

\begin{eqnarray}\label{5.28}
C(\lambda;T)&=&\left(\sum\limits_{s=1}^T\varphi_s(\lambda){\overline{\varphi}}_{s-1}(\lambda)\right)\nonumber\\
&&-\frac{1}{2}\left(a_1^{-1}+\dots+a_T^{-1}\right)a_T\left(\varphi_1(\lambda){\overline{\varphi}}_0(\lambda)-\varphi_0(\lambda){\overline{\varphi}}_1(\lambda)\right)
\end{eqnarray} since
\begin{align*}
\varphi_1(\lambda){\overline{\varphi}}_0(\lambda)-\varphi_0(\lambda){\overline{\varphi}}_1(\lambda)&=\varphi_1(\lambda)-{\overline{\varphi_1}}(\lambda)\\
&\hspace{-30pt}=\left(\mu(\lambda)-m_{11}(\lambda)m_{12}^{-1}(\lambda)\right)-\left({\overline{\mu}}(\lambda)-m_{11}(\lambda)m_{12}^{-1}(\lambda)\right)\\
&\hspace{-30pt}=\left(\mu(\lambda)-{\overline{\mu}}(\lambda)\right)m_{12}^{-1}(\lambda)
\end{align*}
admits analytic continuation from $\sigma_{ell}(J_T)$ to $\mathbb{C}\setminus\sigma_{ess}(J_T)$ as an analytic (algebraic, but not rational)
function $\left(\mu(\lambda)-{{\mu^{-1}}}(\lambda)\right)m_{12}^{-1}(\lambda)$. It asymptotically behaves like
$$-{\rm Tr}(M(\lambda))m_{12}^{-1}(\lambda)\sim -\lambda^T\left(\prod\limits_{s=1}^T
a_s\right)^{-1}\left(\lambda^{T-1}\left(\prod\limits_{s=1}^{T-1} a_s\right)^{-1}\right)^{-1}=-\frac{\lambda}{a_T}$$
assuming that the branch of the analytic function $\mu(\lambda)$ has been chosen so that $\mu(\lambda)\rightarrow 0$, as
$\lambda\rightarrow\infty$. Note that in $\mathbb{C}^+\cup\mathbb{C}^-$ we are in the hyperbolic situation (Lemma~\ref{4.20}) so the
eigenvalues of $M(\lambda)$ (which are $\mu(\lambda),\mu^{-1}(\lambda)$ as $det(M(\lambda))\equiv1$) with one of them (at our choice)
behaving at infinity like $\mu(\lambda)\sim \left(\lambda^T/\left(\prod\limits_{s=1}^T a_s\right)\right)^{-1}$ and the other one like
$\mu^{-1}(\lambda)\sim \left(\lambda^T/\prod\limits_{s=1}^T a_s\right)$. Note that the correct choice of the branch of $\mu(\lambda)$
(despite invariance of the definition of $C(\lambda;T)$ under that choice) is crucial for our proof.

Therefore the second term in Equation~\eqref{5.28} admits the asymptotics $$\frac{1}{2}\left(a_1^{-1}+\dots+a_T^{-1}\right)\lambda$$ as
$\lambda\rightarrow\infty$ according to our choice of the branch $\mu(\lambda).$ The opposite choice of the branch changes the sign in the
above mentioned asymptotics of the second term and therefore leads to a sophisticated calculation of the first term, which we are not able to
produce here.

{\underline{(Step Two)}} We next analyse the asymptotics at infinity of the first term in Formula~\eqref{5.28}. Since $\varphi_0=1,$
$$\varphi_1(\lambda)=(\mu(\lambda)-m_{11}(\lambda))m_{12}^{-1}(\lambda)=O\left(\lambda^{-1}\right),$$ as
$\mu(\lambda)=O\left(\lambda^{-T}\right),\lambda\rightarrow\infty$. For the function $\varphi_s(\lambda)$ we have
\begin{align*}
\left(\begin{array}{c}
\varphi_s(\lambda)\\
\varphi_{s+1}(\lambda)\end{array}\right)&=B_s(\lambda)\dots B_1(\lambda)\left(\begin{array}{c}
\varphi_0\\
\varphi_{1}(\lambda)\end{array}\right)\\
&=\left(B_{s+1}^{-1}(\lambda)\dots B_{T}^{-1}(\lambda)\right)M(\lambda)\left(\begin{array}{c}
\varphi_0\\
\varphi_{1}(\lambda)\end{array}\right)\\
&=\mu\left(B_{s+1}^{-1}(\lambda)\dots B_{T}^{-1}(\lambda)\right)\left(\begin{array}{c}
\varphi_0\\
\varphi_{1}(\lambda)\end{array}\right)\\
&=\left[\mu\left(B_{s+1}^{-1}(\lambda)\dots B_{T}^{-1}(\lambda)\right)\right]\left(\begin{array}{c}
1\\
O\left(\frac{1}{\lambda}\right)\end{array}\right)\\
&=\left(\begin{array}{c}
O\left(\lambda^{(T-s)-T}\right)\\
O\left(\lambda^{(T-s)-T}\right)\end{array}\right)=O\left(\lambda^{-s}\right)
\end{align*}
since obviously the matrix function $$\mu(\lambda)B_{s+1}^{-1}(\lambda)\dots B_{T}^{-1}(\lambda)=O\left(\lambda^{(T-s)-T}\right),$$ as
$$B_s^{-1}(\lambda)=\frac{a_s}{a_{s-1}}\left(\begin{array}{cc}
\frac{\lambda}{a_s}&-1\\
\frac{a_{s-1}}{a_s}&0\end{array}\right)=O\left(\lambda\right)$$
and $\mu(\lambda)=O\left(\lambda^{-T}\right),\lambda\rightarrow\infty$. Hence
$$\varphi_s(\lambda)=O\left(\lambda^{-s}\right),\lambda\rightarrow\infty,$$ $s=1,2,\dots,T$.

{\underline{(Step Three)}} We now analyse the asymptotics of the analytic continuation of $\overline{\varphi}_{s-1}(\lambda)\equiv
\overline{\varphi_{s-1}(\overline{\lambda})}$. Taking the complex conjugate for $\lambda\in\sigma_{ell}(J_T)$ we get
$$\left(\begin{array}{c}
\overline{\varphi}_s(\overline{\lambda})\\
\overline{\varphi}_{s+1}(\overline{\lambda})\end{array}\right)=B_s(\lambda)\dots B_1(\lambda)\left(\begin{array}{c}
1\\
\left(\mu^{-1}(\lambda)-m_{11}(\lambda)\right)m_{12}^{-1}(\lambda)\end{array}\right).$$
Since $\mu^{-1}(\lambda)=O\left(\lambda^T\right),\lambda\rightarrow\infty,$ for the analytic continuation to $\mathbb{C}$ this gives
$$\left(\begin{array}{c}
\overline{\varphi}_s(\overline{\lambda})\\
\overline{\varphi}_{s+1}(\overline{\lambda})\end{array}\right)=B_s(\lambda)\dots B_1(\lambda)\left(\begin{array}{c}
1\\
O\left(\lambda\right)\end{array}\right),$$
where the matrix polynomial $B_s(\lambda)\dots B_1(\lambda)=O\left(\lambda^s\right),\lambda\rightarrow\infty$. So,
$$\overline{\varphi_{s+1}(\overline{\lambda})}=O\left(\lambda^{s+1}\right),$$ $\lambda\rightarrow\infty, s=0,1,\dots,T-1.$

{\underline{(Step Four)}} Combining both asymptotic formulas for $\varphi_s(\lambda)$ and $\overline{\varphi}_s(\overline{\lambda})$ we
finally obtain
\begin{align*}
\sum\limits_{s=1}^T\varphi_s(\lambda)\overline{\varphi}_{s-1}(\overline{\lambda})&=\sum\limits_{s=1}^T O\left(\lambda^{-s}\right)\cdot
O\left(\lambda^{s-1}\right)\\
&=\sum\limits_{s=1}^T O\left(\lambda^{-1}\right)=O\left(\lambda^{-1}\right)
\end{align*}
as $\lambda\rightarrow\infty$, which leads to the formula
$$C(\lambda;T)=\frac{1}{2}\left(a_1^{-1}+\dots+a_T^{-1}\right)\lambda+O\left(1\right),\lambda\rightarrow\infty.$$ As a corollary we obtain
that the function $C(\lambda;T)$ is always of order exactly $1$ and is therefore never identically zero.
\end{proof}

\section{The ansatz for the eigenvector and its asymptotics}

In this section we plan to elaborate on the explicit construction of the eigenvector associated with the eigenvalue embedded in the a.c.
spectrum of the Jacobi matrix with a diagonal perturbation of Coulomb-type decay.

The following classical result will be used in the next lemma.

\begin{prop}\label{zyg}{\rm{(see~\cite{5})}}. Assume $\alpha,\gamma\in\mathbb{R},\gamma>0$, then the following estimate holds:
$$\sum\limits_{k=n}^\infty \frac{e^{ik\alpha}}{k^\gamma}=O\left(1/n^\gamma\right),n\rightarrow\infty,~~~\iff
\frac{\alpha}{2\pi}\not\in\mathbb{Z}.$$
\end{prop}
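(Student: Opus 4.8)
The plan is to establish the two implications separately, using nothing heavier than summation by parts (Abel summation) together with elementary estimates for geometric and $p$-series.

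\emph{Direction $\alpha/(2\pi)\notin\mathbb{Z}\ \Rightarrow\ $ the $O(1/n^\gamma)$ bound.} First I would set $T_k:=\sum_{j=n}^{k}e^{ij\alpha}$ for $k\ge n$ and $T_{n-1}:=0$. Since $e^{i\alpha}\ne 1$, summing the geometric series gives the uniform bound $|T_k|\le 2/|e^{i\alpha}-1|=:C_\alpha$, with $C_\alpha$ independent of both $k$ and $n$. Abel summation then yields
$$\sum_{k=n}^{N}\frac{e^{ik\alpha}}{k^\gamma}=\frac{T_N}{N^\gamma}-\sum_{k=n}^{N-1}T_k\left(\frac{1}{(k+1)^\gamma}-\frac{1}{k^\gamma}\right).$$
Because $\gamma>0$, the boundary term $T_N/N^\gamma$ tends to $0$ as $N\to\infty$; and because $k\mapsto k^{-\gamma}$ is decreasing, the increments in the remaining sum telescope, with $\sum_{k=n}^{\infty}\big(k^{-\gamma}-(k+1)^{-\gamma}\big)=n^{-\gamma}$. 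Hence the series converges and $\big|\sum_{k=n}^{\infty}e^{ik\alpha}/k^\gamma\big|\le C_\alpha\,n^{-\gamma}$, which is precisely the asserted estimate.

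\emph{The converse.} Here I would argue by contraposition. If $\alpha/(2\pi)\in\mathbb{Z}$ then $e^{i\alpha}=1$ and the sum reduces to $\sum_{k=n}^{\infty}k^{-\gamma}$. For $\gamma\le 1$ this series diverges, so the estimate fails outright. For $\gamma>1$, comparison with $\int_{n}^{\infty}x^{-\gamma}\,dx$ gives $\sum_{k=n}^{\infty}k^{-\gamma}\ge (\gamma-1)^{-1}n^{1-\gamma}$, and since $n^{1-\gamma}/n^{-\gamma}=n\to\infty$ this tail is not $O(n^{-\gamma})$. In all cases the bound is violated, which finishes the equivalence.

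The only step needing genuine care is the uniform-in-$(k,n)$ bound on the geometric partial sums $T_k$ (this is precisely where the hypothesis $e^{i\alpha}\ne 1$ enters) and keeping the bookkeeping in the summation by parts clean; everything else is routine. Since this is a classical Zygmund-type estimate, in the final write-up I would in any case also point to \cite{5} for the sharp statement.
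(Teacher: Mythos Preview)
Your argument is correct. The paper itself gives no proof of this proposition at all: it is stated as a classical result with a bare reference to Zygmund~\cite{5}, and is then used as a black box in Lemma~\ref{per36}. So there is nothing to compare against on the paper's side.

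Your Abel-summation proof is the standard one and is cleanly executed: the uniform bound $|T_k|\le 2/|e^{i\alpha}-1|$ on the geometric partial sums is exactly where the hypothesis $\alpha/(2\pi)\notin\mathbb{Z}$ enters, the telescoping $\sum_{k\ge n}\bigl(k^{-\gamma}-(k+1)^{-\gamma}\bigr)=n^{-\gamma}$ gives the sharp constant, and the converse via the integral lower bound $\sum_{k\ge n}k^{-\gamma}\ge (\gamma-1)^{-1}n^{1-\gamma}$ (for $\gamma>1$) together with divergence for $\gamma\le 1$ is the natural way to close the equivalence. Including this short self-contained argument is an improvement over a bare citation; keeping the pointer to \cite{5} is still appropriate.
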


We will now introduce the function $\omega_n$ which is an important part of the eigenvector of the embedded eigenvalue.

\begin{lemma}\label{per36}
Let $\lambda\in\sigma_{ell}(J_T),\alpha>1$ and  \begin{equation}\label{4.5}\omega_n(\lambda):=\sum\limits_{m=n+1}^\infty
m^{-\alpha}Im(\varphi_m(\lambda))Im(\varphi_{m-1}(\lambda)),\end{equation} where $(\varphi_n)$ is defined as in \eqref{4.6}. Then
\begin{equation}\label{asyp1} \omega_n=\frac{C(\lambda;T)}{2(\alpha-1)T n^{\alpha-1}}+O\left(\frac{1}{n^\alpha}\right),
n\rightarrow\infty.\end{equation}  Moreover, $\omega_n\in l^2$ for $\alpha>\frac{3}{2}$.
\end{lemma}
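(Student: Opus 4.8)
The plan is to split the summand $m^{-\alpha}\,\mathrm{Im}(\varphi_m)\mathrm{Im}(\varphi_{m-1})$ into a part that is $T$-periodic in $m$ (this produces the leading term) and a genuinely oscillatory part (contributing only to the $O(n^{-\alpha})$ remainder). From Lemma~\ref{per31} one has $\varphi_{m+T}(\lambda)=\mu(\lambda)\varphi_m(\lambda)$ for every $m\geq0$; since $|\mu(\lambda)|=1$ on $\sigma_{ell}(J_T)$, the sequence $m\mapsto\varphi_m(\lambda)\overline{\varphi_{m-1}(\lambda)}$ is $T$-periodic, while $\varphi_{m+T}(\lambda)\varphi_{m+T-1}(\lambda)=\mu(\lambda)^2\,\varphi_m(\lambda)\varphi_{m-1}(\lambda)$. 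Writing $m=T\kappa+s$ with $s\in\{1,\dots,T\}$ and $\kappa=\kappa(m)\geq0$, and using the elementary identity $\mathrm{Im}(z)\mathrm{Im}(w)=\tfrac12\mathrm{Re}(z\overline w)-\tfrac12\mathrm{Re}(zw)$, one obtains
\[
\mathrm{Im}(\varphi_m)\mathrm{Im}(\varphi_{m-1})=p_s-\tfrac12\mathrm{Re}\!\big(\widetilde\Phi_s\,e^{2i\kappa\theta(\lambda)}\big),
\]
where, with the convention $\varphi_T:=\mu\varphi_0$ (consistent with Definition~\ref{c4.1}), we set $p_s:=\tfrac12\mathrm{Re}(\varphi_s\overline{\varphi_{s-1}})$ and $\widetilde\Phi_s:=\varphi_s\varphi_{s-1}$ for $s=1,\dots,T$; by the (quasi-)periodicity these capture the entire $m$-dependence apart from the factor $e^{2i\kappa\theta}$, and, crucially, $\sum_{s=1}^{T}p_s=\tfrac12 C(\lambda;T)$ by Definition~\ref{c4.1}. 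Accordingly $\omega_n=A_n+B_n$, where $A_n:=\sum_{m>n}m^{-\alpha}p_{s(m)}$ and $B_n:=-\tfrac12\sum_{m>n}m^{-\alpha}\mathrm{Re}\big(\widetilde\Phi_{s(m)}e^{2i\kappa(m)\theta(\lambda)}\big)$.

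For $A_n$ I would write $p_{s(m)}=\overline p+e_m$ with $\overline p:=\tfrac1T\sum_{s=1}^T p_s=\tfrac{C(\lambda;T)}{2T}$ and $e_m$ a $T$-periodic sequence of zero mean over a period. The constant piece gives, by comparison of the tail sum $\sum_{m>n}m^{-\alpha}$ with $\int x^{-\alpha}\,dx$, exactly $\overline p\big(\tfrac{n^{1-\alpha}}{\alpha-1}+O(n^{-\alpha})\big)=\tfrac{C(\lambda;T)}{2(\alpha-1)Tn^{\alpha-1}}+O(n^{-\alpha})$. Since $e_m$ has zero period-mean, its partial sums $\sum_{j=n+1}^{m}e_j$ are bounded uniformly in $m$ and $n$, so Abel summation against the monotone decreasing factor $m^{-\alpha}$ yields $\sum_{m>n}m^{-\alpha}e_m=O(n^{-\alpha})$. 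Hence $A_n=\tfrac{C(\lambda;T)}{2(\alpha-1)Tn^{\alpha-1}}+O(n^{-\alpha})$, the claimed leading behaviour.

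For $B_n$ I would group the tail sum into the $T$ residue classes $s\in\{1,\dots,T\}$, reducing it to a fixed linear combination of sums $\sum_{\kappa\geq\kappa_0}(T\kappa+s)^{-\alpha}e^{2i\kappa\theta(\lambda)}$ with $\kappa_0=\kappa_0(s,n)\asymp n/T$. Expanding $(T\kappa+s)^{-\alpha}=T^{-\alpha}\kappa^{-\alpha}+O(\kappa^{-\alpha-1})$, the error contribution sums to $O(\kappa_0^{-\alpha})=O(n^{-\alpha})$, while the main contribution is, up to the constant $T^{-\alpha}e^{-2i\theta(\lambda)}$, the sum $\sum_{\kappa\geq\kappa_0}\kappa^{-\alpha}e^{i\kappa(2\theta(\lambda))}$. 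This is the one place where the hypothesis $\lambda\in\sigma_{ell}(J_T)$ is used essentially: the two eigenvalues $e^{\pm i\theta(\lambda)}$ of $M(\lambda)$ are then distinct, so $e^{2i\theta(\lambda)}\neq1$, i.e.\ $\tfrac{2\theta(\lambda)}{2\pi}=\tfrac{\theta(\lambda)}{\pi}\notin\mathbb{Z}$, and Proposition~\ref{zyg}, applied with frequency $2\theta(\lambda)$ and exponent $\alpha>1$, gives $\sum_{\kappa\geq\kappa_0}\kappa^{-\alpha}e^{2i\kappa\theta(\lambda)}=O(\kappa_0^{-\alpha})=O(n^{-\alpha})$. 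Summing the finitely many residue classes yields $B_n=O(n^{-\alpha})$, so $\omega_n=A_n+B_n=\tfrac{C(\lambda;T)}{2(\alpha-1)Tn^{\alpha-1}}+O(n^{-\alpha})$.

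Finally, the $l^2$ claim follows at once: the asymptotics give $|\omega_n|^2=O(n^{2-2\alpha})$, and $\sum_n n^{2-2\alpha}<\infty$ precisely when $\alpha>\tfrac32$. I expect the only real obstacle to be bookkeeping rather than conceptual: matching the block decomposition of $\varphi_m$ at the period boundaries (the role of the convention $\varphi_T=\mu\varphi_0$) with the definition of $C(\lambda;T)$ in the first step, and making the replacement of $(T\kappa+s)^{-\alpha}$ by $T^{-\alpha}\kappa^{-\alpha}$ in $B_n$ rigorous with error genuinely $O(n^{-\alpha})$; the non-resonance condition $\theta(\lambda)/\pi\notin\mathbb{Z}$ needed to invoke Proposition~\ref{zyg} is the point where ellipticity of $\lambda$ enters.
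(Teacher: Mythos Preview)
Your proof is correct and follows essentially the same strategy as the paper: decompose $\mathrm{Im}(\varphi_m)\mathrm{Im}(\varphi_{m-1})$ via the quasi-periodic structure of $(\varphi_m)$ into a part whose period-average produces $\tfrac{1}{2}C(\lambda;T)$ and a genuinely oscillatory part with frequency $2\theta(\lambda)$, then dispatch the oscillatory tail with Proposition~\ref{zyg} (using $\theta(\lambda)/\pi\notin\mathbb{Z}$ on $\sigma_{ell}$) and the non-oscillatory tail by comparison with $\int x^{-\alpha}\,dx$. The paper organises the same computation slightly differently---it expands $\mathrm{Im}(e^{ij\theta}\varphi_s)$ directly rather than via your identity $\mathrm{Im}(z)\mathrm{Im}(w)=\tfrac12\mathrm{Re}(z\overline w)-\tfrac12\mathrm{Re}(zw)$, and handles the dependence of the starting index on $n\bmod T$ by a two-case split ($n\equiv0$ versus $n\not\equiv0\pmod T$) instead of your uniform residue-class grouping plus Abel summation on the zero-mean periodic remainder $e_m$---but the substance is the same.
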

\begin{remark}
Formula~\eqref{asyp1} shows that at zeros of $C(\lambda;T)$ the asymptotics for the function $\omega_n$ change drastically. This proves the
importance of our analysis in Section 4.
\end{remark}

\begin{proof}
 The proof is divided into two cases.

{\underline{Case 1}} If $n=T(k-1)$ then by Lemma~\ref{per31} we obtain the relation
\begin{align*}
\omega_n&=\sum\limits_{j=k-1}^\infty \sum\limits_{s=1}^{T}(Tj+s)^{-\alpha}Im\left(\varphi_{Tj+s}\right)Im\left(\varphi_{Tj+s-1}\right)\\
&=\sum\limits_{j=k-1}^\infty
(Tj)^{-\alpha}\left(\sum\limits_{s=1}^{T-1}\left[Im\left(e^{ij\theta}{\varphi}_s\right)Im\left(e^{ij\theta}{\varphi}_{s-1}\right)\right]\right.\\
&\hspace{90pt}+Im\left(e^{i(j+1)\theta}{\varphi}_0\right)Im\left(e^{ij\theta}{\varphi}_{T-1}\right)\bigg)+O(k^{-\alpha})\\
&=\sum\limits_{j=k-1}^\infty
(Tj)^{-\alpha}\bigg(\left[\sum\limits_{s=1}^{T-1}\frac{-1}{4}\left(e^{ij\theta}{\varphi}_s-e^{-ij\theta}\overline{{\varphi}}_s\right)\left(e^{ij\theta}{\varphi}_{s-1}-e^{-ij\theta}\overline{{\varphi}}_{s-1}\right)\right]\\
&~~~~~-\frac{1}{4}\left(e^{i(j+1)\theta}{\varphi}_0-e^{-i(j+1)\theta}\overline{{\varphi}}_0\right)\left(e^{ij\theta}{\varphi}_{T-1}-e^{-ij\theta}\overline{{\varphi}}_{T-1}\right)\bigg)+O(k^{-\alpha}).
\end{align*}
Then, $\theta(\lambda)\not\in\pi\mathbb{Z}$ as $\lambda\in\sigma_{ell}(J_T)$, so by Proposition~\ref{zyg}
\begin{align*}
\omega_n&=\frac{T^{-\alpha}}{4}\sum\limits_{j=k-1}^\infty
\frac{1}{j^{\alpha}}\left(\sum\limits_{s=1}^{T-1}\left({\varphi}_s\overline{{\varphi}}_{s-1}+\overline{{\varphi}}_s{\varphi}_{s-1}\right)+e^{i\theta}{\varphi}_0\overline{{\varphi}}_{T-1}+e^{-i\theta}\overline{{\varphi}}_0{\varphi}_{T-1}\right)\\
&\hspace{20pt}+O(k^{-\alpha})\\
&=T^{-\alpha}\sum\limits_{j=k-1}^\infty \frac{j^{-\alpha}}{2}  C(\lambda;T)+O(k^{-\alpha}).
\end{align*}
 Thus we can apply the Integral Test and obtain
$$\omega_n=\frac{C(\lambda;T)}{2(\alpha-1)T n^{\alpha-1}}+O\left(\frac{1}{n^\alpha}\right).$$
Finally, if $C(\lambda;T)\neq 0$,

$$\omega_n\asymp n^{1-\alpha}\in l^2\iff \alpha>\frac{3}{2}.$$
 This proves the result for Case 1.

{\underline{Case 2}} If $n=T(k-1)+s_n$ with $s_n\in\{1,\dots,T-1\}$. Then

$$\omega_n=\sum\limits_{j=k}^\infty \sum\limits_{s=1}^{T}(Tj+s)^{-\alpha}Im\left(\varphi_{Tj+s}\right)Im\left(\varphi_{Tj+s-1}\right)+F(n)$$
where, noting that $s_n+1\geq2,$
\begin{align*}
F(n)&:=\sum\limits_{s=s_n+1}^{T}\left(T(k-1)+s\right)^{-\alpha} Im\left(\varphi_{T(k-1)+s}\right) Im\left(\varphi_{T(k-1)+s-1}\right)\\
&=\sum\limits_{s=s_n+1}^{T}\left(T(k-1)+s\right)^{-\alpha}
Im\left(e^{i(k-1)\theta}\widetilde{\varphi}_s\right)Im\left(e^{i(k-1)\theta}\widetilde{\varphi}_{s-1}\right)\\
&=O(k^{-\alpha})=O\left(n^{-\alpha}\right).
\end{align*}
Thus, the remainder can be absorbed in the error term.
\end{proof}

We now make an ansatz for the eigenvector of the embedded eigenvalue, $\lambda\in\sigma_{ell}(J_T)$, in the form
$$u_n=Im(\varphi_n)\omega_n.$$

\begin{thm}
The sequence, $(u_n)$, has the asymptotic form $$u_n=\frac{\widetilde{\eta}_s\sin(n{\theta/
T}+\widetilde{\zeta}_s)}{n^{\alpha-1}}+O\left(\frac{1}{n^\alpha}\right),$$ where $\widetilde{\eta_s}$ and $\widetilde{\zeta}_s$ are real
functions, $\alpha>1, n=T(k-1)+s$ with $s\in\{0,\dots, T-1\}$ and $\theta(\lambda)$ as in Equation~\eqref{quasi}. Moreover, the vector $(\widetilde{\eta}_s)_{s=0}^{T-1}$ is equal to the product of $C(\lambda;T)$ with some non-zero vector. Therefore the only
source of vanishing leading terms in the function $u_n$ is the vanishing of $C(\lambda;T)$.
\end{thm}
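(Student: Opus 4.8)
The plan is to combine the precise asymptotics for $\omega_n$ from Lemma~\ref{per36} with the multiplicative structure of $\varphi_n$ established in Lemma~\ref{per31}, and then re-express everything in the variable $n$ rather than $k$. Throughout, $\lambda\in\sigma_{ell}(J_T)$ is fixed.

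First I would unpack $Im(\varphi_n)$. By Lemma~\ref{per31} (see \eqref{4.6}), for $n=T(k-1)+s$ with $s\in\{0,\dots,T-1\}$ we have $\varphi_n=\varphi_s e^{i(k-1)\theta}$, so splitting $\varphi_s$ into real and imaginary parts gives
$$Im(\varphi_n)=Re(\varphi_s)\sin((k-1)\theta)+Im(\varphi_s)\cos((k-1)\theta)=|\varphi_s|\,\sin\big((k-1)\theta+\psi_s\big)$$
for a real phase $\psi_s$ depending only on $s$ and $\lambda$ (when $\varphi_s\neq0$; the identity is trivial otherwise). In particular $|Im(\varphi_n)|\le\max_{0\le s\le T-1}|\varphi_s|<\infty$ uniformly in $n$, finiteness of the $|\varphi_s|$ being guaranteed since $m_{12}(\lambda)\neq0$ on $\sigma_{ell}(J_T)$ (cf.\ the proof of Lemma~\ref{per32}). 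Next, by Lemma~\ref{per36} and \eqref{asyp1}, $\omega_n=\frac{C(\lambda;T)}{2(\alpha-1)T n^{\alpha-1}}+O(n^{-\alpha})$, so
$$u_n=Im(\varphi_n)\,\omega_n=\frac{C(\lambda;T)\,|\varphi_s|\,\sin\big((k-1)\theta+\psi_s\big)}{2(\alpha-1)T\,n^{\alpha-1}}+Im(\varphi_n)\cdot O(n^{-\alpha}),$$
and the final term is $O(n^{-\alpha})$ by the uniform bound just established. Since $(k-1)\theta=n\theta/T-s\theta/T$, setting $\widetilde{\zeta}_s:=\psi_s-s\theta/T$ and $\widetilde{\eta}_s:=\frac{|\varphi_s|\,C(\lambda;T)}{2(\alpha-1)T}$ then gives precisely $u_n=\frac{\widetilde{\eta}_s\sin(n\theta/T+\widetilde{\zeta}_s)}{n^{\alpha-1}}+O(n^{-\alpha})$, with $\widetilde{\eta}_s,\widetilde{\zeta}_s$ real.

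For the last assertion, observe that $(\widetilde{\eta}_s)_{s=0}^{T-1}=\frac{C(\lambda;T)}{2(\alpha-1)T}\big(|\varphi_0|,|\varphi_1|,\dots,|\varphi_{T-1}|\big)$, and the normalization $\varphi_0=1$ forces $|\varphi_0|=1\neq0$; hence $(|\varphi_s|)_{s=0}^{T-1}$ is a fixed non-zero vector, independent of $C(\lambda;T)$, and the amplitude vector of $(u_n)$ is exactly $C(\lambda;T)$ times it. Thus, taken over the $T$ residue classes, the leading term of $(u_n)$ vanishes precisely when $C(\lambda;T)=0$. I do not anticipate a real obstacle; the only point needing a little care is to verify that the error in Lemma~\ref{per36} and the bound on $Im(\varphi_n)$ are uniform over the finitely many residues $s$, so the two $O(n^{-\alpha})$ contributions may be merged — both being immediate because there are only $T$ residue classes and $\lambda$ is held fixed.
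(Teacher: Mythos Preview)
Your proof is correct and follows essentially the same route as the paper's: both write $Im(\varphi_n)$ as an amplitude times $\sin((k-1)\theta+\text{phase})$, multiply by the asymptotics of $\omega_n$ from Lemma~\ref{per36}, and then shift the phase to convert $(k-1)\theta$ into $n\theta/T$. Your version is in fact slightly tighter, since you go directly through $\varphi_n=\varphi_s e^{i(k-1)\theta}$ from Lemma~\ref{per31} (yielding amplitude $|\varphi_s|$) rather than via the general formula~\eqref{172}, and you make explicit both the uniform bound needed to absorb $Im(\varphi_n)\cdot O(n^{-\alpha})$ and the reason the amplitude vector is non-zero (namely $\varphi_0=1$), which the paper leaves implicit.
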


\begin{proof}
By Equation~\eqref{172}
\begin{align*}
Im(\varphi_n)&=\widetilde{\alpha}_s\sin((k-1)\theta)+\widetilde{\beta}_s\cos((k-1)\theta)\\
&=\sqrt{\widetilde{\alpha}^2_s+\widetilde{\beta}^2_s}\left(\frac{\widetilde{\alpha}_s}{\sqrt{\widetilde{\alpha}^2_s+\widetilde{\beta}^2_s}}\sin((k-1)\theta)+\frac{\widetilde{\beta}_s}{\sqrt{\widetilde{\alpha}^2_s+\widetilde{\beta}^2_s}}\cos((k-1)\theta)\right)\\
&=\eta'_s\sin((k-1)\theta+\phi_s')
\end{align*}
where $\eta_s':=\sqrt{\widetilde{\alpha}^2_s+\widetilde{\beta}^2_s}$ and $\phi_s'$ are real functions of $\lambda$.
Then, using Lemma~\ref{per36}, we obtain
\begin{align*}
u_n&=Im(\varphi_n)\omega_n\\
&=\left(\eta'_s\sin((k-1)\theta+\phi'_s)\right)\left(\frac{2C(\lambda;T)}{(\alpha-1)T
n^{\alpha-1}}+O\left(\frac{1}{n^\alpha}\right)\right)\\
&=\frac{\widetilde{\eta}_s\sin((k-1)\theta+\phi_s')}{n^{\alpha-1}}+O\left(\frac{1}{n^\alpha}\right),
\end{align*}
where $\widetilde{\eta}_s:=\frac{C(\lambda;T)\eta_s'}{2(\alpha-1)T}$.
Finally, we wish to express our eigenvector in terms of $n$. Thus,
\begin{align*}
u_n&=\frac{\widetilde{\eta}_s\sin((k-1)\theta+\phi_s')}{n^{\alpha-1}}+O\left(\frac{1}{n^\alpha}\right)\\
&=\frac{\widetilde{\eta}_s\sin(n\theta/T+\widetilde{\zeta}_s)}{n^{\alpha-1}}+O\left(\frac{1}{n^\alpha}\right),
\end{align*}
where $\widetilde{\zeta}_s:=\phi_s'-s\theta/T.$
\end{proof}

\section{The structure of the potential and its asymptotics}

The following theorem gives an explicit formula for the potential, and the eigenvector, in terms of the solutions $\varphi_n$ of the periodic
problem, $\lambda$ and the parameter $\alpha$.

\begin{thm}\label{per39} Let $\lambda\in\sigma_{ell} (J_T)$ with $C(\lambda;T)\neq 0$. Define $\omega_n(\lambda)$ as in \eqref{4.5} and
$\varphi_n(\lambda)$ as in \eqref{4.6} and let $\alpha>\frac{3}{2}, n=T(k-1)+s,s\in\{0,\dots,T-1\}$, \begin{equation}\label{4.2}
q_n=-a_{n-1}(Im(\varphi_{n-1}))^2\left(\frac{n^{-\alpha}}{\omega_n}\right)+a_{n}(Im(\varphi_{n+1}))^2\left(\frac{(n+1)^{-\alpha}}{\omega_n}\right).\end{equation}
Then \begin{equation}\label{4.1} u_n(\lambda)=\omega_n(\lambda)Im\left(\varphi_n(\lambda)\right)\end{equation} satisfies
\begin{equation}\label{4.3} a_{n-1}u_{n-1}+a_{n}u_{n+1}+(q_n-\lambda) u_n=0 \end{equation} for $n\geq 2$. Moreover, $q_n$ has the following
asymptotic behaviour:\begin{equation} \label{4.4}
q_n=\frac{1}{n}\left(\rho_s(\lambda)\sin\left(2n\theta(\lambda)/T+\zeta_s(\lambda)\right)+\delta_s(\lambda)\right)+O\left(\frac{1}{n^2}\right),\end{equation}
where $\rho_s,\zeta_s$ and $\delta_s$ are real functions.
\end{thm}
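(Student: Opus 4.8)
The plan is to verify equation \eqref{4.3} by direct substitution, and then to establish the asymptotic expansion \eqref{4.4} using the results of Section 4 and Lemma~\ref{per36}.

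\medskip

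\textbf{Verification of the difference equation.} First I would substitute $u_n = \omega_n \, Im(\varphi_n)$ into the left-hand side of \eqref{4.3}. The key observation is that $Im(\varphi_n)$ itself solves the \emph{unperturbed} recurrence $a_{n-1}Im(\varphi_{n-1}) + a_n Im(\varphi_{n+1}) = \lambda \, Im(\varphi_n)$, since $\varphi_n$ does and the coefficients $a_j$, $\lambda$ are real. Hence
\begin{align*}
a_{n-1}u_{n-1} + a_n u_{n+1} - \lambda u_n &= a_{n-1}\omega_{n-1}Im(\varphi_{n-1}) + a_n \omega_{n+1} Im(\varphi_{n+1}) - \lambda \omega_n Im(\varphi_n)\\
&= a_{n-1}(\omega_{n-1} - \omega_n) Im(\varphi_{n-1}) + a_n(\omega_{n+1}-\omega_n)Im(\varphi_{n+1}).
\end{align*}
Now I would use the defining telescoping identity for $\omega_n$ from \eqref{4.5}, namely $\omega_{n-1} - \omega_n = n^{-\alpha} Im(\varphi_n) Im(\varphi_{n-1})$ and $\omega_{n+1} - \omega_n = -(n+1)^{-\alpha}Im(\varphi_{n+1})Im(\varphi_n)$. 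Substituting these gives $a_{n-1}n^{-\alpha}Im(\varphi_n)(Im(\varphi_{n-1}))^2 - a_n(n+1)^{-\alpha}Im(\varphi_n)(Im(\varphi_{n+1}))^2$, which equals $-q_n \omega_n Im(\varphi_n) = -q_n u_n$ by the definition \eqref{4.2} of $q_n$ (after dividing and multiplying by $\omega_n$, valid since $C(\lambda;T)\neq 0$ guarantees $\omega_n\neq 0$ for large $n$ by Lemma~\ref{per36}). This yields \eqref{4.3} for all sufficiently large $n\ge 2$.

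\medskip

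\textbf{Asymptotics of the potential.} For \eqref{4.4} I would insert the asymptotics $\omega_n = \frac{C(\lambda;T)}{2(\alpha-1)Tn^{\alpha-1}} + O(n^{-\alpha})$ from Lemma~\ref{per36} into \eqref{4.2}. Since $C(\lambda;T)\neq 0$, we have $\frac{n^{-\alpha}}{\omega_n} = \frac{2(\alpha-1)T}{C(\lambda;T)}\cdot\frac{n^{\alpha-1}}{n^\alpha}(1+O(1/n)) = \frac{2(\alpha-1)T}{C(\lambda;T)}\cdot\frac{1}{n}(1+O(1/n))$, and similarly $\frac{(n+1)^{-\alpha}}{\omega_n} = \frac{2(\alpha-1)T}{C(\lambda;T)}\cdot\frac{1}{n}(1+O(1/n))$. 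Thus to leading order
$$q_n = \frac{2(\alpha-1)T}{C(\lambda;T)}\cdot\frac{1}{n}\Big(-a_{n-1}(Im(\varphi_{n-1}))^2 + a_n(Im(\varphi_{n+1}))^2\Big) + O\!\left(\frac{1}{n^2}\right).$$
It then remains to analyse the bracketed quantity. By Lemma~\ref{lem39} (applied to the solution $\varphi_n$, using $n=T(k-1)+s$), each $(Im(\varphi_m))^2$ has the form $\eta_s\sin(2(k-1)\theta + \phi_s) + \gamma_s$ with $s$-dependent real coefficients; moreover the coefficients $a_{n-1}, a_n$ are $T$-periodic in $n$, so the combination $-a_{n-1}(Im(\varphi_{n-1}))^2 + a_n(Im(\varphi_{n+1}))^2$ is, for each residue $s$, of the form $(\text{amplitude})_s\sin(2(k-1)\theta + (\text{phase})_s) + (\text{constant})_s$. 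Rewriting $(k-1)\theta = n\theta/T - s\theta/T$ absorbs the shift into the phase, producing $\rho_s\sin(2n\theta/T + \zeta_s) + \delta_s$ after folding in the factor $\frac{2(\alpha-1)T}{C(\lambda;T)}$. This gives \eqref{4.4}.

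\medskip

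\textbf{Main obstacle.} The routine part is the telescoping verification of \eqref{4.3}; the delicate point is controlling the error terms in the expansion of $q_n$ to ensure the remainder is genuinely $O(n^{-2})$ rather than $O(n^{-1})$. This requires the second-order accuracy in Lemma~\ref{per36}, i.e.\ that the error there is $O(n^{-\alpha})$ and not merely $o(n^{1-\alpha})$, so that the relative error in $n^{-\alpha}/\omega_n$ is $O(1/n)$ and, when multiplied by the $O(1/n)$ main term, contributes only $O(1/n^2)$. One must also check that the $s$-dependence is handled uniformly across the finitely many residues $s\in\{0,\dots,T-1\}$, and that the difference $(n+1)^{-\alpha} - n^{-\alpha} = O(n^{-\alpha-1})$ does not spoil the leading-order cancellation structure — it contributes at order $n^{-\alpha-1}/\omega_n = O(n^{-2})$, safely inside the error.
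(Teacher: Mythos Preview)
Your proposal is correct and follows essentially the same route as the paper: telescoping verification of \eqref{4.3} via the identities for $\omega_{n\pm1}-\omega_n$, then Lemma~\ref{per36} for $n^{-\alpha}/\omega_n$ combined with Lemma~\ref{lem39} for $(Im\,\varphi_{n\pm1})^2$ to obtain \eqref{4.4}. The only point the paper treats more explicitly is the boundary residues $s=0$ and $s=T-1$, where $n\pm1$ lands in an adjacent $k$-block so the argument of the sine picks up an extra $\pm 2\theta$; this is absorbed into the phase $\zeta_s$ exactly as your final rewriting step indicates, so your argument goes through.
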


\begin{remark}
In Formula~\eqref{4.2} we assume without loss of generality that $\omega_n\neq 0~\forall n=1,2,\dots.$ Indeed, due to the condition that
$C(\lambda;T)\neq 0$ and Formula~\eqref{asyp1} we see that $\omega_n\neq 0~\forall n\geq L$, where $L$ is sufficiently large. If $\omega_n$
vanishes for some $n<L$, then one can change the ansatz for $\omega_n$,~\eqref{4.5}, by introducing into the sum over $m$ an extra multiple,
$c_m$, where $c_m=1~\forall~m\geq L$. The values $c_1,c_2,\dots, c_{L-1}$ can be chosen in a suitable way such that
$\omega_1,\omega_2,\dots,\omega_{L-1}$ are not equal to zero.
\end{remark}

\begin{remark}
It can be shown by a lengthy calculation that if $C(\lambda;T)\neq 0$ then $$\sum\limits_{s=0}^{T-1} |\rho_s(\lambda)|^2>0,$$ showing that
the potential is genuinely of the form $\frac{1}{n}$ times an oscillating term. This also follows, without any calculation, from the fact
that no eigenvalues can be embedded in $\sigma_{ell}(J_T)$ by a potential $(q_n)$ with $q_n=O(\frac{1}{n^2})$. The proof of this fact for $T=1$ (the discrete Schr\"{o}dinger case) is well-known, see, e.g.~\cite{9g}.
\end{remark}

\begin{proof}
{\underline{(Step One)}} Check $u_n(\lambda)$ in \eqref{4.1} satisfies \eqref{4.3}. Then, for $n\geq 2$,
\begin{align}
&~~~~~~~~a_{n-1}u_{n-1}+a_{n}u_{n+1}-\lambda u_n=-q_nu_n\nonumber\\
&\iff
a_{n-1}\omega_{n-1}Im\left(\varphi_{n-1}\right)+a_{n}\omega_{n+1}Im\left(\varphi_{n+1}\right)-\lambda\omega_nIm\left(\varphi_n\right)\nonumber\\
&\hspace{30pt}=-q_n\omega_nIm\left(\varphi_n\right)\nonumber\\
&\iff Im(a_{n-1}\varphi_{n-1}+a_{n}\varphi_{n+1}-\lambda
\varphi_n)\omega_n+Im\left(\varphi_{n-1}\right)a_{n-1}\left(\omega_{n-1}-\omega_n\right)\nonumber\\
&~~~~~~~~~~~~~~~~~~~~~~+a_{n}Im\left(\varphi_{n+1}\right)\left(\omega_{n+1}-\omega_n\right)=-q_nIm\left(\varphi_n\right)\omega_n\nonumber\\
&\iff Im\left(\varphi_{n-1}\right)a_{n-1}\left(\omega_{n-1}-\omega_n\right)\nonumber\\
&~~~~~~~~~~~~~~~~~~~~~~+a_{n}Im\left(\varphi_{n+1}\right)\left(\omega_{n+1}-\omega_n\right)=-q_nIm\left(\varphi_n\right)\omega_n\label{journal1}
\end{align}
where we have used that \begin{equation} \omega_{n-1}-\omega_n=n^{-\alpha}Im\left(\varphi_n\right)Im\left(\varphi_{n-1}\right),\end{equation}
and that $\varphi_n$ satisfies the three-term recurrence relation~\eqref{4.3}. Choosing $q_n$ as in \eqref{4.2} guarantees the equality
\eqref{journal1}.

 Since $\lambda\in\sigma_{ell}(J_T)$, $|\varphi_n|=|{\varphi}_{s}|$, which means that $a_{n-1}\left(Im(\varphi_{n-1})\right)^2$ and
 $a_n\left(Im(\varphi_{n+1})\right)^2$ are oscillating factors in the variable $k$. Then one can expect growth or decay in $q_n$ to come from
 the components $\left(\frac{n^{-\alpha}}{\omega_n}\right)$ and $\left(\frac{(n+1)^{-\alpha}}{\omega_n}\right)$.
 By Lemma~\ref{per36} we have the relation $\omega_n\asymp n^{1-\alpha}$, and so we obtain
$$\frac{(n+1)^{-\alpha}}{\omega_n}\asymp n^{-1},$$ which gives a Coulomb-type decay for $q_n$.

{\underline{(Step Two)}}  We now prove \eqref{4.4}. Using Lemmas~\ref{lem39} and \ref{per36} for $n=T(k-1)+s, s\in\{1,\dots,T-1\}$, we
immediately obtain
\begin{align}
q_n&=-a_{s-1}\left(\eta_{s-1}\sin(2(k-1)\theta+\phi_{s-1})+\gamma_{s-1}\right)\left(\frac{2(\alpha-1)T}{nC(\lambda;T)}+O\left(\frac{1}{n^2}\right)\right)\nonumber\\
&+a_s\left(\eta_{s+1}\sin(2(k-1)\theta+\phi_{s+1})+\gamma_{s+1}\right)\left(\frac{2(\alpha-1)T}{nC(\lambda;T)}+O\left(\frac{1}{n^2}\right)\right)\label{4.15}\\
&=\frac{2(\alpha-1)T}{nC(\lambda;T)}\bigg(\left(-\eta_{s-1}a_{s-1}\cos\phi_{s-1}+\eta_{s+1}a_s\cos\phi_{s+1}\right)\sin(2(k-1)\theta)\nonumber\\
&\hspace{20pt}+\left(\eta_{s+1}a_s\sin\phi_{s+1}-\eta_{s-1}a_{s-1}\sin\phi_{s-1}\right)\cos(2(k-1)\theta)\nonumber\\
&\hspace{20pt}-\gamma_{s-1}a_{s-1}+\gamma_{s+1}a_s\bigg)+O\left(\frac{1}{n^2}\right)\nonumber\\
&=\frac{1}{n}\left(\left[\rho_s(\lambda)\sin(2(k-1)\theta(\lambda)+\zeta_s'(\lambda))\right]+\delta_s\right)+O\left(\frac{1}{n^2}\right),\nonumber
\end{align}
for real functions $\zeta_s'$ of $\lambda$,

\begin{multline*}
\rho_s^2:=\frac{4T^2({\alpha-1})^2}{C(\lambda;T)^2}\bigg(\left(\eta_{s+1}a_s\cos\phi_{s+1}-\eta_{s-1}a_{s-1}\cos\phi_{s-1}\right)^2\\
+\left(\eta_{s+1}a_s\sin\phi_{s+1}-\eta_{s-1}a_{s-1}\sin\phi_{s-1}\right)^2\bigg),
\end{multline*} and $\delta_s:=\frac{2T({\alpha-1})}{C(\lambda;T)}\left(-\gamma_{s-1}a_{s-1}+\gamma_{s+1}a_s\right)$.

For the special cases of $s\in\{0,T-1\}$ we must be careful because $n-1$ and $n+1$ will produce different values in the parameter $k$ to
those contained in the $n$-th element.
    When $s=0$ (i.e. $n=(k-1)T$):
    \begin{align*}
    q_n&=-a_{T-1}\left(\eta_{T-1}\sin(2(k-2)\theta+\phi_{T-1})+\gamma_{T-1}\right)\left(\frac{2(\alpha-1)T}{nC(\lambda;T)}+O\left(\frac{1}{n^2}\right)\right)\\
&~~~~~~~~~~~~+a_T\left(\eta_{1}\sin(2(k-1)\theta+\phi_{1})+\gamma_{1}\right)\left(\frac{2(\alpha-1)T}{nC(\lambda;T)}+O\left(\frac{1}{n^2}\right)\right)\\
    &=O\left(\frac{1}{n^2}\right)+\frac{2(\alpha-1)T}{nC(\lambda;T)}\bigg(-\eta_{T-1}a_{T-1}\sin(2(k-1)\theta+\phi_{T-1}-2\theta)\\
    &~~~~~~~+\eta_{1}a_0\sin(2(k-1)\theta+\phi_1)-a_{T-1}\gamma_{T-1}+a_{T}\gamma_1\bigg).
     \end{align*} This is of the same form as \eqref{4.15}. Consequently,
     $$q_n=O\left(\frac{1}{n^2}\right)+\frac{1}{n}\left[\rho_0(\lambda)\sin\left(2(k-1)\theta(\lambda)+\zeta_0'\right)+\delta_0(\lambda)\right],$$
     for functions $\zeta_0',\delta_0:=\frac{2T({\alpha-1})}{C(\lambda;T)}\left(-\gamma_{T-1}a_{T-1}+\gamma_{1}a_T\right)$ and

     $$\rho^2_0:=\frac{4T^2({\alpha-1})^2}{C(\lambda;T)^2}\big(\left(\eta_{1}a_T\cos\phi_{1}-\eta_{T-1}a_{T-1}\cos(\phi_{T-1}-2\theta)\right)^2$$
     $$~~~~~~~~~~~~~~~~~~~~~~~~~~~~~~~+\left(\eta_{1}a_T\sin\phi_{1}-\eta_{T-1}a_{T-1}\sin(\phi_{T-1}-2\theta)\right)^2\big).$$
Similarly, when $s=T-1$ (i.e. $n=kT-1$):
\begin{align*}
 q_n&=-a_{T-2}\left(\eta_{T-2}\sin(2(k-1)\theta+\phi_{T-2})+\gamma_{T-2}\right)\left(\frac{2(\alpha-1)T}{nC(\lambda;T)}+O\left(\frac{1}{n^2}\right)\right)\\
&~~~~~~~~~~~~~~~~~~~~~+a_T\left(\eta_{0}\sin(2k\theta+\phi_{0})+\gamma_{0}\right)\left(\frac{2(\alpha-1)T}{nC(\lambda;T)}+O\left(\frac{1}{n^2}\right)\right)\\
&=O\left(\frac{1}{n^2}\right)+\frac{1}{n}\left(\rho_{T-1}(\lambda)\sin\left(2(k-1)\theta(\lambda)+\zeta_{T-1}'\right)+\delta_{T-1}(\lambda)\right),
 \end{align*}
 for functions
 $\zeta_{T-1}',\delta_{T-1}:=\frac{2T({\alpha-1})}{C(\lambda;T)}\left(-\gamma_{T-2}a_{T-2}+\gamma_{0}a_{T-1}\right)$ and
  $$\rho_{T-1}:=\frac{4T^2({\alpha-1})^2}{C(\lambda;T)^2}\big(\left(\eta_{0}a_{T-1}\cos(\phi_{0}+2\theta-\eta_{T-2}a_{T-2}\cos\phi_{T-2})\right)^2$$
  $$~~~~~~~~~~~~~~~~~~~~~~~~~~~~~~~~+\left(\eta_{0}a_{T-1}\sin(\phi_{0}+2\theta)-\eta_{T-2}a_{T-2}\sin\phi_{T-2}\right)^2\big).$$
 Thus, for all $s\in\{0,\dots,T-1\}$, we have the result:
 $$q_n=O\left(\frac{1}{n^2}\right)+\frac{1}{n}\left[\rho_s(\lambda)\sin\left(2(k-1)\theta(\lambda)+\zeta_s'\right)+\delta_s(\lambda)\right].$$
However, we still wish to express our potential in terms of the variable $n$. This follows simply from defining the new function $\zeta_s$,
where $\zeta_s:=\zeta_s'-s\theta/T.$
\end{proof}

\begin{remark}
Concerning the roots of $C(\lambda;T)$ for $\lambda\in\sigma_{ell}(J_T)$, we may say the following. As has been stated in the theorem,
using a sufficiently slowly decaying potential, $q_n=O\left(\frac{1}{n}\right)$, it is possible to introduce a subordinate $l^2$-solution for
any fixed $\lambda\in\sigma_{ell}(J_T)$, except at roots of $C(\lambda;T)$. However we believe that at any root of $C(\lambda;T)$,
$\lambda\in\sigma_{ell}(J_T)$, the existence of the subordinate $l^2$-solution can still be obtained by using a potential,
$q_n=O\left(\frac{1}{n}\right)$, $n\rightarrow\infty$.
\end{remark}

\begin{remark}
The last statement in the previous remark is true for the case of the discrete Schr\"{o}dinger operator. To see this choose a candidate eigenvector of the form
$u_n=\frac{(-1)^{\left\lfloor{\frac{n}{2}}\right\rfloor}}{n}$ for $n\geq 1$ and a potential defined by $q_n=\frac{2n(-1)^n}{n^2-1},$ for $n\geq2$ and $q_1=\frac{1}{2}$. Then clearly $u_2+q_1u_1=0$ and the recurrence equations $$\frac{(-1)^{\lfloor{\frac{n+1}{2}}\rfloor}}{n+1}+\frac{(-1)^{\lfloor{\frac{n-1}{2}}\rfloor}}{n-1}+\frac{2n(-1)^n}{n^2-1}\frac{(-1)^{\lfloor{\frac{n}{2}}\rfloor}}{n}=0,$$ are also satisfied for $n\geq 2$. Thus the eigenvalue $\lambda=0$ becomes embedded in the a.c.~spectrum of the operator.
\end{remark}

\section{Embedded eigenvalues}
Theorem~\ref{per39} guarantees a subordinate solution of the recurrence relation~\eqref{4.3}, but does not guarantee an embedded eigenvalue
since it still remains to be seen if the first-row equation of the Jacobi matrix is satisfied, i.e. $$q_1u_1+a_1u_2=\lambda u_1.$$

The next result shows that it is always possible to make $\lambda$ an eigenvalue by suitably modifying the potential, slightly.

\begin{thm}
Assume $\lambda\in \sigma_{ell}(J_T), C(\lambda;T)\neq 0, \alpha>\frac{3}{2}$ and let $u_n$ be given by \eqref{4.1} for $n\geq 2$ and $q_n$
by~\eqref{4.2} for $n\geq 3$. Then it is possible to choose $u_1,q_1,q_2\in\mathbb{R}$ such that $\lambda\in\sigma_p(J_T+Q),$ where $Q$ is an
infinite diagonal matrix with entries $(q_n)$.
 \end{thm}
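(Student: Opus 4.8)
The plan is to build the eigenvector out of the subordinate solution already produced in Theorem~\ref{per39} and then repair the first row by hand. By the remark following Theorem~\ref{per39} we may assume $\omega_n\neq 0$ for every $n\geq 1$, so that \eqref{4.1} also makes sense at $n=1$ and \eqref{4.2} at $n=2$. Set $u_1:=\omega_1(\lambda)Im(\varphi_1(\lambda))$ and let $q_2$ be given by \eqref{4.2} with $n=2$. Then the sequence $(u_n)_{n\geq 1}$ is precisely the one to which Theorem~\ref{per39} applies, so \eqref{4.3} holds for every $n\geq 2$; equivalently, all rows of the eigenvalue equation $(J_T+Q)u=\lambda u$ except the very first are satisfied. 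Since $\alpha>\frac{3}{2}$, Lemma~\ref{per36} together with the boundedness of $(Im(\varphi_n))$ in $n$ (recall $|\varphi_n|=|\varphi_s|$) gives $u_n=O(n^{1-\alpha})$, hence $(u_n)\in l^2(\mathbb{N})$. Moreover, by the asymptotic formula for $(u_n)$ proved in Section~5 the leading coefficients $(\widetilde\eta_s)_{s=0}^{T-1}$ equal $C(\lambda;T)$ times a fixed non-zero vector, and $C(\lambda;T)\neq 0$ together with $\theta(\lambda)\notin\pi\mathbb{Z}$ (as $\lambda\in\sigma_{ell}(J_T)$) forces $u_n$ to be non-zero along a subsequence; in particular $u\not\equiv 0$ irrespective of the value chosen for $u_1$.

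It remains to satisfy the first-row equation $q_1u_1+a_1u_2=\lambda u_1$. The key point is that $u_1\neq 0$: since $\lambda\in\sigma_{ell}(J_T)$, the monodromy matrix has eigenvalue $\mu=e^{i\theta(\lambda)}$ with $\theta(\lambda)\notin\pi\mathbb{Z}$ and $m_{12}(\lambda)\neq 0$, so $Im(\varphi_1)=Im\big((\mu-m_{11})m_{12}^{-1}\big)=\sin\theta(\lambda)\,m_{12}^{-1}(\lambda)\neq 0$, while $\omega_1\neq 0$ by our normalisation. Hence we may simply take
$$q_1:=\lambda-\frac{a_1u_2}{u_1}\in\mathbb{R}.$$
With these choices $(J_T+Q)u=\lambda u$ holds in every row and $u\in l^2\setminus\{0\}$, so $\lambda\in\sigma_p(J_T+Q)$. (Since $q_n=O(1/n)\to 0$ by \eqref{4.4}, $Q$ is a bounded, indeed compact, diagonal operator, $J_T+Q$ is bounded self-adjoint, and $\sigma_{ess}(J_T+Q)=\sigma_{ess}(J_T)\supseteq\sigma_{ell}(J_T)\ni\lambda$; thus $\lambda$ is genuinely an embedded eigenvalue, even though only membership in $\sigma_p$ is asserted.)

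There is essentially no real obstacle here: the substantive work has been done in Theorem~\ref{per39} and in the $(u_n)$-asymptotics theorem, and what remains is a single scalar linear equation solved using $u_1\neq 0$. If one prefers to keep $q_2$ genuinely free rather than reusing \eqref{4.2}, the same conclusion follows from a short case analysis: with $u_2,u_3$ fixed by \eqref{4.1}, solve
\begin{align*}
a_1u_2+(q_1-\lambda)u_1&=0,\\
a_1u_1+a_2u_3+(q_2-\lambda)u_2&=0
\end{align*}
for $u_1,q_1,q_2$, taking $u_1=1$ and then reading off $q_2$ and $q_1$ when $u_2\neq 0$, and when $u_2=0$ (which, by $Im(\varphi_2)=(\lambda/a_1)Im(\varphi_1)$, happens only at $\lambda=0$, the exceptional elliptic point for odd $T$) setting $u_1:=-a_2u_3/a_1$, $q_2:=0$, and $q_1:=\lambda$ (or $q_1$ arbitrary if in addition $u_1=0$). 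The one mild point to watch in either route is that non-vanishing of $u$ is supplied by the tail governed by $C(\lambda;T)\neq 0$, not by the first few components, so modifying $u_1$ never threatens $u\not\equiv 0$.
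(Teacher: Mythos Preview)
Your proof is correct, and in fact your main route is slightly cleaner than the paper's. The paper treats $u_1,q_1,q_2$ as three free unknowns constrained by the two equations
\[
a_1u_2+(q_1-\lambda)u_1=0,\qquad a_1u_1+a_2u_3+(q_2-\lambda)u_2=0,
\]
and resolves them by a case split on whether $u_2=0$: if $u_2\neq 0$ it fixes $q_1\neq\lambda$ freely, reads off $u_1$, then $q_2$; if $u_2=0$ it sets $u_1:=-a_2u_3/a_1$ and $q_1:=\lambda$ with $q_2$ free. You instead extend formulas \eqref{4.1} and \eqref{4.2} to $n=1$ and $n=2$ (legitimate by the remark after Theorem~\ref{per39}), so the $n=2$ row is automatically satisfied by Theorem~\ref{per39}, and only $q_1$ remains. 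Your observation $Im(\varphi_1)=\sin\theta(\lambda)\,m_{12}^{-1}(\lambda)\neq 0$ for $\lambda\in\sigma_{ell}(J_T)$ gives $u_1\neq 0$ directly, eliminating the case split. What your approach buys is economy and a single free parameter; what the paper's buys is that it never needs to argue $u_1\neq 0$ and makes the freedom in the construction (an extra parameter) visible. You also make explicit two points the paper leaves implicit: that $u\in l^2$ (via Lemma~\ref{per36} and boundedness of $Im(\varphi_n)$) and that $u\not\equiv 0$ (via the tail asymptotics governed by $C(\lambda;T)\neq 0$). The parenthetical identifying $\lambda=0$ as ``the exceptional elliptic point for odd $T$'' leans on a conjecture from Section~4 rather than a proved fact, but it is inessential to your argument.
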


\begin{remark}
Note that $$\sigma_{ess}(J_T)=\sigma_{a.c.}(J_T)=\sigma_{a.c.}(J_T+Q)=\sigma_{ess}(J_T+Q)$$
and $$\sigma_{ell}(J_T)=\sigma_{ell}(J_T+Q).$$
The coinciding of the essential spectrum of $J_T$ and $J_T+Q$ follows from the classical Weyl Theorem~\cite{9l}. The preservation of the a.c.
spectrum under the perturbation $q_n$ follows from the combination of subordinancy theory~\cite{7} and asymptotic Levinson-type
theory~\cite{9m}. A similar result for the continuous Schr\"{o}dinger case was proven by Behncke~\cite{9n}.
\end{remark}
\begin{proof}

By Theorem~\ref{per39} we have $$a_{n-1}u_{n-1}+a_nu_{n+1}+(q_n-\lambda)u_n=0$$ for $n\geq 3$. However, we also need to satisfy

\begin{equation}\label{exact1}
q_1u_1+a_1u_2=\lambda u_1
\end{equation}
and
\begin{equation}
a_1u_1+(q_2-\lambda)u_2+a_2u_3=0.
\end{equation}
We have two cases:
\begin{enumerate}
\item
If $u_2\neq 0$ then defining $q_2:=\frac{-\lambda u_2-a_2u_3-a_1u_1}{u_2}$ with $u_1:=-\frac{a_1u_2}{q_1-\lambda}$, with $q_1$ as a free
parameter and not equal to $\lambda$, ensures all conditions are satisfied.
\item
If $u_2=0$ then defining $u_1:=-\frac{a_2u_3}{a_1}$ and $q_1:=\lambda$, with $q_2$ as a free parameter, ensures all conditions are
satisfied.\qedhere
\end{enumerate}
\end{proof}

We are grateful to the unknown referee for his/her very useful remarks.


\begin{thebibliography}{1}
\bibitem{9}
S. Albeverio, ``On bound states in the continuum of $N$-body systems and the virial theorem", Ann. Physics {\bf{71}} (1972), no. 1, 167-276.

\bibitem{9n}
H. Behncke, ``Absolute continuity of {H}amiltonians with von {Neumann} {W}igner potentials", Proc. Amer. Math. Soc. {\bf{111}} (1991), no. 2. 373-384.

\bibitem{7}
D. J. Gilbert, D. B. Pearson, ``On Subordinacy and analysis of the spectrum of one-dimensional Schr\"{o}dinger operators", J. Math. Anal. Appl. {\bf{128}} (1987) no. 1, 30-56.

\bibitem{9m}
J. Janas, M. Moszynski, ``New discrete {L}evinson type asymptotics of solutions of linear systems", J. Difference. Equ. Appl. {\bf{12}} (2006), no. 2, 133-163.

\bibitem{9l}
 T. Kato, ``Perturbation theory for linear operators", Springer Verlag (1966).

\bibitem{6}
S. Khan, D.B. Pearson, ``Subordinancy and spectral theory for infinite matrices", Helv. Phys. Acta. {\bf{65}} (1992), no. 4, 505-527.

\bibitem{9d}
A. Kiselev, C. Remling, B. Simon, ``Effective perturbation methods for one-dimensional Schr\"{o}dinger operators", J. Diff. Equa. {\bf{151}} (1999), no. 2, 290-312.

\bibitem{9e}
H. Kr\"{u}ger, ``On the existence of embedded eigenvalues", J. Math. Anal. App. {\bf{395}} (2012), no. 2, 776-787.

\bibitem{9k}
D. Krutikov, ``On eigenvalues of discrete {S}chr{\"{o}}dinger operators with potentials of {C}oulomb-type decay", Lett. Math. Phys. {\bf{62}} (2002), no. 3, 185-192 .

\bibitem{1}
P. Kurasov, S. Naboko, ``Wigner-von Neumann perturbations of a periodic potential: spectral singularities in bands", Proc. Cam. Phil. Soc., {\bf{142}} (2007), no. 1, 161-183.

\bibitem{9f}
M. Lukic, D. C. Ong, ``Generalized Pr\"{u}fer variables for perturbations of Jacobi and CMV matrices", arXiv:1409.7116.

\bibitem{9a}
M. Lukic, D.C. Ong, ``Wigner-von Neumann type perturbations of periodic Schr\"{o}dinger operators", Trans. Amer. Math. Soc. {\bf{367}} (2014), no. 1, 707-724.

\bibitem{9h}
S. Naboko, ``On the dense point spectrum of the {S}chr{\"{o}}dinger and {D}irac operators", Teoret. Mat. Fiz. {\bf{68}} (1986), 18-28.

\bibitem{9b}
S. Naboko, S. Simonov, ``Zeroes of the spectral density of the periodic Schr\"{o}dinger operator with Wigner-von Neumann potential", Proc. Cam. Phil. Soc. {\bf{153}} (2012), no. 1, 33-58.

\bibitem{9g}
S. N. Naboko, S. I. Yakovlev, ``The discrete {S}chr{\"{o}}dinger operator: the point spectrum lying on the continuous spectrum", St. Petersburg Math. J. {\bf{4}} (1993), no. 3, 559-568.

\bibitem{3}
J. von Neumann, E. P. Wigner, ``\"{U}ber merk w\"{u}rdige diskrete Eigenwerte", Z. Phys. {\bf{30}} (1929), 465-467.

\bibitem{2}
A. Ostrowski, ``Solution of equations in Euclidean and Banach spaces", Academic Press (1973).

\bibitem{4}
M. Reed, B. Simon, ``Methods of Modern Mathematical Physics, IV: Analysis of Operators", New York, Academic Press (1978).

\bibitem{9j}
 B. Simon, ``Orthogonal polynomials on the unit circle, part 1: classical theory", American Mathematical Society, Providence (2005).

\bibitem{9i}
B. Simon, ``Some {J}acobi matrices with decaying potential and dense point spectrum", Comm. Math. Phys. {\bf{87}} (1982/1983), 253-258.

\bibitem{Sim97} B. Simon,
``Some Schr\"{o}dinger operators with dense point spectrum", Proc. Amer. Math. Soc. {\bf 125} (1997), no. 1, 203-208.

\bibitem{5}
A. Zygmund, ``Trigonometric Series, vol 1", Cambridge University Press (1959).

\end{thebibliography}
\end{document}